\documentclass[12pt]{amsart}

\usepackage{amsmath}
\usepackage{amssymb}
\usepackage{latexsym}
\usepackage{graphicx} 
\usepackage{mathrsfs}
\usepackage{mathtools}
\usepackage{amsthm}
\usepackage{tikz}
\usetikzlibrary{cd}
\allowdisplaybreaks[4]

\usepackage[colorlinks,citecolor=blue,linkcolor=blue,linktocpage,unicode]{hyperref}

\numberwithin{equation}{section}
\numberwithin{figure}{section}

\newtheorem{thm}{Theorem}[section]
\newtheorem{prop}[thm]{Proposition}
\newtheorem{lemma}[thm]{Lemma}
\newtheorem{cor}[thm]{Corollary}

\newtheorem{conj}[thm]{Conjecture}

\theoremstyle{definition}
\newtheorem{example}[thm]{Example}
\newtheorem{defin}[thm]{Definition}
\newtheorem{remark}[thm]{Remark}

\DeclareMathOperator{\rad}{rad}
\DeclareMathOperator{\good}{Good}
\DeclareMathOperator{\id}{id}

\title[Decompositions of good involutions]{Decompositions of good involutions on quandles}
\date{\today}
\subjclass[2020]{57K12, 20N02, 13A99}
\keywords{quandle, good involution, symmetric quandle, symplectic quandle, interaction-free union of quandles, direct product of quandles}

\author{Yasuhito Nakajima}
\address{Yasuhito Nakajima, Independent Researcher}
\email{yasuhito.nakajima.mathematics@gmail.com}

\author{Kentaro Yamaguchi}
\address{Kentaro Yamaguchi, Research Institute for Mathematical Sciences, Kyoto University, Kyoto 606-8502, Japan}
\email{c201596e@alm.icu.ac.jp}
\begin{document}
\begin{abstract}
We study the behavior of good involutions under two fundamental constructions of quandles: interaction-free unions and direct products.
In particular, we show that the set of good involutions on a quandle decomposes naturally into the set of involutions on its maximal trivial component and the set of good involutions on the complement.
Moreover, we construct an example of a connected noninvolutory symmetric quandle with multiple good involutions, which serves as a counterexample to a conjecture by Ta. 
\end{abstract}

\maketitle

\tableofcontents

%%%%%%%%%%%%%%%%%%%%%%%%%%%%%%%%%%%%%%%
\section{Introduction}
\label{sec: introduction}

A \emph{quandle} is an algebraic structure introduced independently by Joyce \cite{MR638121} and Matveev \cite{MR672410} in knot theory.
Quandles serve as a powerful tool to construct invariants of (oriented) knots.
From an algebraic perspective, a quandle is an idempotent right-invertible self-distributive algebraic structure. This structure appears in various areas of mathematics, for instance, as an axiomatization of the properties of conjugation in a group or as a generalization of point symmetries of symmetric spaces.

%%%
Beyond their applications to low-dimensional topology, quandles have also been studied as algebraic structures in their own right.
A central topic in the structure theory of algebraic systems is to understand how algebraic properties behave under basic operations, such as direct products, disjoint unions, and extensions.
In quandle theory, Cartesian products and disjoint unions are fundamental constructions used to produce new quandles from a family of quandles, such as a direct product of quandles, and an interaction-free union of quandles (cf. \cite{MR4410185}).
$G$-families of quandles and $Q$-families of quandles \cite{MR2443248,MR3275740,MR3430992} provide other important examples of such quandles constructed from a family of quandles.
%%%

A \emph{symmetric quandle} \cite{MR2657689} is a quandle equipped with a \emph{good involution} \cite{MR2371714}.
Symmetric quandles are useful to study knots that are not necessarily oriented or orientable.
The existence of good involutions on a given quandle (and hence the existence of a symmetric quandle structure) is quite nontrivial; thus Taniguchi posed the problems of their existence and classification in \cite{MR4688855}.
Several classification results have appeared in the literature.
For instance, Kamada and Oshiro \cite{MR2657689} classified good involutions on involutory quandles, trivial quandles, and dihedral quandles.
Taniguchi \cite{MR4688855} studied the existence of good involutions on generalized Alexander quandles and provided their classification.
More recently, Ta investigated the existence of good involutions on conjugation subquandles \cite{ta2025goodinvolutionsconjugationsubquandles}, as well as on twisted conjugation subquandles and Alexander quandles \cite{ta2025goodinvolutionstwistedconjugation}.
In our previous work \cite{NY2604}, we established the nonexistence of good involutions on symplectic quandles.

%%%
From an algebraic perspective, a natural question is how good involutions behave under these constructions.
In particular, we investigate when the set of good involutions on a product or interaction-free union of quandles is completely decomposed into the product of those on its components.
%%%

%%%%%%%%%%%%%%%%%%%%%%%%%%%%%%%%%%%%%%%
\subsection{Main results}
In this paper, we study how good involutions behave under two fundamental constructions of quandles: interaction-free unions and direct products.

First, we present our main results regarding the interaction-free union of quandles.
For any quandle $(Q,\ast)$, let $D$ denote the \emph{maximal trivial component} of $Q$. 
Note that $(D,\ast|_{D})$ is a trivial quandle.
Then every quandle $Q$ decomposes as an interaction-free union of the maximal trivial component $D$ and its complement $Q\setminus D$.
In this setting, we obtain the following result:
\begin{thm}\label{main 0: interaction-free union}
Let $Q$ be a quandle and $D$ denote the maximal trivial component of $Q$. If $D \neq \emptyset$ and $Q\setminus D \neq \emptyset$, then there is a canonical bijection
\begin{equation*}
    \good (Q,\ast)
    \cong 
    \good(Q\setminus D, \ast)\times \operatorname{Inv}(D).
\end{equation*}
\end{thm}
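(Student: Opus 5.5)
The plan is to show that every good involution on $Q$ preserves $D$, and hence also $Q\setminus D$. Once this is established, the bijection is given by restriction, $\rho \mapsto (\rho|_{Q\setminus D}, \rho|_{D})$, and its inverse glues two maps together.

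Throughout I use the following conventions.
\begin{itemize}
\item A good involution is an involution $\rho\colon Q\to Q$ satisfying $\rho(x\ast y)=\rho(x)\ast y$ and $x\ast\rho(y)=x\ast^{-1}y$ for all $x,y$.
\item I read the maximal trivial component $D$ as the set of $x\in Q$ with $x\ast y=x$ and $y\ast x=y$ for all $y\in Q$. Equivalently, $D$ is the largest subset for which $Q=D\sqcup(Q\setminus D)$ is an interaction-free union.
\end{itemize}
In particular, both $D$ and $Q\setminus D$ are subquandles. For $x\in D$ we also have $x\ast^{-1}y=x$ and $y\ast^{-1}x=y$ for all $y$.

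\textbf{Step 1 ($\rho(D)\subseteq D$).} Let $\rho\in\good(Q,\ast)$ and $x\in D$. For any $y\in Q$, the first axiom gives $\rho(x)\ast y=\rho(x\ast y)=\rho(x)$. The second axiom gives $y\ast\rho(x)=y\ast^{-1}x=y$. Hence $\rho(x)\in D$. Since $\rho$ is a bijection with $\rho^2=\id$, we get $\rho(D)=D$ and therefore $\rho(Q\setminus D)=Q\setminus D$.

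\textbf{Step 2 (restriction).} The restriction $\rho|_D$ is an involution of $D$, so it lies in $\operatorname{Inv}(D)$. The restriction $\rho|_{Q\setminus D}$ is an involution of the subquandle $Q\setminus D$, and it satisfies the good-involution axioms there because they hold in $Q$. This gives the map $\good(Q,\ast)\to\good(Q\setminus D,\ast)\times\operatorname{Inv}(D)$. It is clearly injective, since $Q$ is the disjoint union of $D$ and $Q\setminus D$.

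\textbf{Step 3 (gluing).} Given $\rho_1\in\good(Q\setminus D,\ast)$ and $\sigma\in\operatorname{Inv}(D)$, define $\rho=\rho_1\sqcup\sigma$. It is an involution of $Q$. I would check the two axioms by cases on where $x$ and $y$ lie.
\begin{itemize}
\item Both $x,y\in Q\setminus D$: this is the goodness of $\rho_1$.
\item $x\in D$: both sides of each axiom equal $\sigma(x)$ or $x$, because every element of $D$ is fixed by all right actions. This also covers the case $x,y\in D$: since $D$ is trivial, every involution of $D$ is good.
\item $x\in Q\setminus D$, $y\in D$: we have $\rho(x\ast y)=\rho(x)=\rho(x)\ast y$, and $x\ast\rho(y)=x=x\ast^{-1}y$, because $\rho(y)\in D$ acts trivially.
\end{itemize}
This gives surjectivity. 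The hypotheses $D\neq\emptyset$ and $Q\setminus D\neq\emptyset$ only ensure that both factors are genuine nonempty quandles. The map is canonical because it is simply restriction.

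\textbf{Main obstacle.} The only nonformal point is Step 1, and it depends on the precise definition of $D$ used in the paper. If $D$ is defined only via $x\ast y=x$, or only via $y\ast x=y$, then I would first show that the two conditions give the same set. If they do not, I would adapt the argument to use whichever characterization makes $Q=D\sqcup(Q\setminus D)$ interaction-free, since Step 1 needs both conditions.
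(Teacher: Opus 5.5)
Your proposal is correct and follows the paper's proof essentially step for step. The same computation gives $\rho(D)\subseteq D$, hence $\rho(D)=D$ by $\rho^{2}=\id_{Q}$; the bijection is the restriction $\rho\mapsto(\rho|_{Q\setminus D},\rho|_{D})$, and surjectivity is by gluing, with the same case check. One minor quibble: your aside that $D$ is ``equivalently the largest subset for which $Q=D\sqcup(Q\setminus D)$ is interaction-free'' is false unless you also require $D$ to be trivial (take $D=Q$). So the fact that $Q\setminus D$ is a subquandle, which the paper checks in Lemma~\ref{lem: interaction-free of maximal trivial component}, is better justified directly: each $s_{y}$ is a bijection fixing $D$ pointwise, hence maps $Q\setminus D$ onto itself.
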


A complete decomposition of good involutions does not hold without additional assumptions on their components; for instance, the set of good involutions on an interaction-free union of trivial quandles fails to decompose into the product of those on each component (see Example~\ref{eg: family of trivial qdle}).
To overcome this obstruction, we impose the connectedness of the components.
Under this natural condition, we establish the following result (Theorem~\ref{main thm: decomp of good inv on interaction-free}):

\begin{thm}\label{main 1: interaction-free union}
Let $\{(Q_{\lambda},\ast_{\lambda})\}_{\lambda \in \Lambda}$ be a family of nontrivial connected quandles and $\bigsqcup^{\mathrm{free}}_{\lambda \in \Lambda} Q_{\lambda}$ denote their interaction-free union.
Then there is a canonical bijection
\begin{equation*}
    \good(\bigsqcup^{\mathrm{free}}_{\lambda \in \Lambda} Q_{\lambda})
    \cong 
    \prod_{\lambda \in \Lambda} \good(Q_{\lambda},\ast_{\lambda}).
\end{equation*}
\end{thm}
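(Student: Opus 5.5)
The plan is to construct the bijection explicitly as $\rho \mapsto (\rho|_{Q_\lambda})_{\lambda\in\Lambda}$. Write $Q=\bigsqcup^{\mathrm{free}}_{\lambda} Q_\lambda$, and recall that $x\ast y = x$ (hence also $x\ast^{-1}y=x$) whenever $x,y$ lie in different components. Recall also that a good involution $\rho$ is an involution with $\rho(x\ast y)=\rho(x)\ast y$ and $x\ast\rho(y)=x\ast^{-1}y$ for all $x,y$. The whole argument reduces to one key claim: every good involution $\rho$ on $Q$ maps each $Q_\lambda$ into itself. Granting the claim, the two directions are routine.

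\emph{Restriction.} Since $\rho$ is an involution with $\rho(Q_\lambda)\subseteq Q_\lambda$, we get $\rho(Q_\lambda)=Q_\lambda$. The restriction $\rho|_{Q_\lambda}$ is then an involution, and it satisfies both axioms, because the operation on $Q_\lambda$ is the restriction of $\ast$.

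\emph{Gluing.} Given $(\rho_\lambda)\in\prod_\lambda \good(Q_\lambda)$, define $\rho$ on $Q$ componentwise. Identities with $x,y$ in the same component hold by hypothesis. For $x\in Q_\lambda$ and $y\in Q_\mu$ with $\lambda\neq\mu$, we have $\rho(x)\in Q_\lambda$ and $\rho(y)\in Q_\mu$. Hence
\[
\rho(x\ast y)=\rho(x)=\rho(x)\ast y, \qquad x\ast\rho(y)=x=x\ast^{-1}y.
\]
So $\rho$ is good, and the two maps are clearly mutually inverse.

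For the key claim, suppose for contradiction that $y\in Q_\lambda$ has $\rho(y)\in Q_\mu$ with $\mu\neq\lambda$. I would show that $\rho$ is then constant on all of $Q_\lambda$. First, for any $z\in Q_\lambda$,
\[
\rho(y\ast z)=\rho(y)\ast z=\rho(y),
\]
since $\rho(y)$ and $z$ lie in different components. The same holds with $\ast^{-1}$ in place of $\ast$, using $\rho(y\ast^{-1} z)=\rho(y)\ast^{-1}z$, which follows from the first axiom by bijectivity of right translations. Moreover $y\ast y=y$ trivially. Thus $\rho(y\ast^{\pm1} z)=\rho(y)$ for every $z\in Q_\lambda$.

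The image $y\ast^{\pm1}z$ again lies in $Q_\lambda$ with $\rho$-value $\rho(y)\in Q_\mu$, so the same argument applies to it. By induction, $\rho$ takes the value $\rho(y)$ on the entire $\operatorname{Inn}(Q_\lambda)$-orbit of $y$. Connectedness of $Q_\lambda$ says this orbit is all of $Q_\lambda$.

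Since $\rho$ is injective, this forces $|Q_\lambda|=1$. But a one-element quandle is trivial, contradicting the assumption that $Q_\lambda$ is nontrivial. This proves the claim.

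The main obstacle is exactly this invariance step. The subtle point is that one must track the orbit of $y$ using only elements of $Q_\lambda$: the other components act trivially, so $\operatorname{Inn}(Q)$-orbits and $\operatorname{Inn}(Q_\lambda)$-orbits agree on $Q_\lambda$. One must also make sure the induction stays within points whose $\rho$-image lies outside $Q_\lambda$, which holds because the value $\rho(y)$ is propagated unchanged. Everything else is a direct verification from the axioms, and canonicity is clear since the bijection is just restriction.
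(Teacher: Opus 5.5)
Your proof is correct. Its overall structure is the same as the paper's: restriction and gluing are mutually inverse once every good involution preserves each component. The paper splits this into Proposition~\ref{prop: inclusion of decomposition interaction-free union} and Lemma~\ref{lem: qdle operation is not identity implies decomposition}.

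The key claim, however, is proved by a genuinely different route. The paper uses the second axiom, $s_{\rho(y)}=s_y^{-1}$. If $\rho(y)$ left $Q_\lambda$, then $s_y$ would restrict to the identity on $Q_\lambda$. It then shows separately, using $s_{a\ast b}=s_b\circ s_a\circ s_b^{-1}$ together with transitivity of $\operatorname{Inn}(Q_\lambda)$, that in a nontrivial connected quandle no element acts trivially. You instead use the first axiom, that $\rho$ commutes with every $s_z^{\pm1}$, together with injectivity of $\rho$. An escaping point $y$ forces $\rho$ to be constant on its $\operatorname{Inn}(Q_\lambda)$-orbit. Your version avoids the conjugation identity, uses connectedness only as transitivity, and uses nontriviality only as $\lvert Q_\lambda\rvert\ge 2$.

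The two arguments implicitly prove different general criteria for the decomposition.
\begin{itemize}
\item The paper's lemma needs that no $x\in Q_\lambda$ acts trivially on $Q_\lambda$.
\item Yours needs that no $y\in Q_\lambda$ is fixed by all of $\operatorname{Inn}(Q_\lambda)$.
\end{itemize}
The roles of acting and acted-upon element are swapped, and the conditions are incomparable in general. For example, the paper's condition holds but yours fails for $R_4\cup\{c\}$ with $s_c$ translation by $2$ on $R_4$ and each $s_i$, $i\in R_4$, fixing $c$. Both conditions hold for nontrivial connected quandles, so either route proves the theorem.
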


As an application of the decomposition theorem (Theorem~\ref{main 0: interaction-free union}), we generalize our previous result \cite[Theorem 5.3]{NY2604} regarding the set of good involutions on symplectic quandles.
More precisely, a generalized symplectic quandle is defined on a module $M$ over a commutative ring $R$ with identity, equipped with an antisymmetric bilinear form $\langle, \rangle \colon M \times M \to R$ (see Definition~\ref{def: generalized symplectic quandles}).
If $M$ is a free module of finite rank over $R$, then the generalized symplectic quandle reduces to a classical symplectic quandle, which was introduced by Navas and Nelson \cite{MR2493966}.
Note that if the characteristic of $R$ is $2$, then symplectic quandles over $R$ are involutory and the identity map serves as a good involution on any involutory quandle. 

In fact, for any generalized symplectic quandle $(M,\ast)$, the radical $\rad(M)$ of the underlying module $M$ coincides with the maximal trivial component of $(M,\ast)$.
Thus, every generalized symplectic quandle $(M,\ast)$ decomposes as an interaction-free union of its subquandles $(M \setminus \rad(M),\ast)$ and $(\rad(M),\ast)$.
Note that since the subquandle $(\rad(M),\ast)$ itself is a trivial quandle, it follows that $\good(\rad(M),\ast) = \operatorname{Inv}(\rad(M))$.
Consequently, we obtain the following (Theorem~\ref{main thm: symplectic qdle}):
\begin{thm}\label{main 2: symplectic quandles}
Let $(M,\ast)$ be a generalized symplectic quandle over an integral domain $R$. If $M \neq \rad(M)$, then there is a canonical bijection
\begin{equation*}
    \good(M,\ast) \cong \good(M\setminus \rad(M),\ast) \times \operatorname{Inv}(\rad(M)).
\end{equation*}
\end{thm}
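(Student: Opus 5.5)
The plan is to deduce this from Theorem~\ref{main 0: interaction-free union} by showing that, for a generalized symplectic quandle over an integral domain, the maximal trivial component of $(M,\ast)$ is exactly $\rad(M)$. Recall that the operation is $x\ast y = x + \langle x,y\rangle y$, and that the maximal trivial component $D$ consists of those $a\in Q$ with $a\ast x = a$ and $x\ast a = x$ for all $x\in Q$; equivalently, $\{a\}$ is an interaction-free summand. Once $D=\rad(M)$ is established, the hypothesis $M\neq\rad(M)$ gives $Q\setminus D\neq\emptyset$. If $\rad(M)=\emptyset$ this cannot happen, since $0\in\rad(M)$, so $D\neq\emptyset$ as well. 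Theorem~\ref{main 0: interaction-free union} then yields the canonical bijection, and the identification $\good(\rad(M),\ast)=\operatorname{Inv}(\rad(M))$ is consistent with the right-hand side.

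The first inclusion, $\rad(M)\subseteq D$, is immediate. If $\langle a,\cdot\rangle=0$, then $a\ast x = a+\langle a,x\rangle x = a$. By antisymmetry $\langle x,a\rangle = -\langle a,x\rangle = 0$, so $x\ast a = x + \langle x,a\rangle a = x$ for all $x$.

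The reverse inclusion $D\subseteq\rad(M)$ is where the integral domain hypothesis enters, and I expect it to be the only real obstacle. Suppose $a\in D$, so that $x\ast a = x$ for all $x\in M$. This says $\langle x,a\rangle a = 0$ for every $x$. If $a\neq 0$, I want to conclude $\langle x,a\rangle=0$, but this needs $a$ to be torsion-free, and $M$ is only a module, not necessarily free. So I would also use the other condition, $a\ast x = a$, which gives $\langle a,x\rangle x = 0$ for all $x$.

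Fix $x$ and set $r=\langle a,x\rangle$. Then $r x = 0$, and $r a = -\langle x,a\rangle a = 0$ as well. Apply bilinearity to $y=x+a$: we get $\langle a,x+a\rangle (x+a) = 0$, and $\langle a,a\rangle = 0$ by antisymmetry (assuming, as in Definition~\ref{def: generalized symplectic quandles}, that antisymmetric means alternating). From $rx=0$ we then obtain $r^{2}=\langle a, r x\rangle = \langle a,0\rangle = 0$. Since $R$ is an integral domain, $r^2=0$ forces $r=0$. Hence $\langle a,x\rangle=0$ for all $x$, and so $a\in\rad(M)$.

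In this step I would first try exactly this trick, pairing $r x=0$ with $a$ to get $r^{2}=0$. This neatly avoids any torsion issues and uses only that $R$ has no nonzero nilpotents. If the definition only gives $\langle x,y\rangle=-\langle y,x\rangle$, the same computation still works, because $\langle a,a\rangle$ never actually appears. With $D=\rad(M)$ in hand, the theorem follows directly from Theorem~\ref{main 0: interaction-free union}.
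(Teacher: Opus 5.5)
Your proposal is correct, and its overall structure matches the paper's proof. You identify $\rad(M)$ with the maximal trivial component $D$, use $0\in\rad(M)$ and $M\neq\rad(M)$ to see that both pieces are nonempty, and then invoke Theorem~\ref{main 0: interaction-free union}. The paper additionally cites the interaction-free decomposition of Proposition~\ref{prop: interaction-free symplectic quandles}. As your write-up implicitly shows, this is not needed, because Theorem~\ref{main 0: interaction-free union} applies to an arbitrary quandle.

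The genuine difference is in the inclusion $D\subseteq\rad(M)$.
\begin{itemize}
\item The paper uses $y\ast x=y$ to get $\langle y,x\rangle x=0$. It concludes that $x$ is either in $\rad(M)$ or torsion, and then appeals to $T(M)\subseteq\rad(M)$ (Proposition~\ref{prop: torsion module and radical}). That inclusion uses that $R$ has no zero divisors.
\item You pair the relation $\langle a,x\rangle x=0$ back against $a$ itself, getting $\langle a,x\rangle^{2}=\langle a,\langle a,x\rangle x\rangle=0$. This avoids the torsion submodule entirely and only needs $R$ to have no nonzero nilpotents.
\end{itemize}
So your argument proves the statement over any reduced commutative ring, which is slightly more than the paper's route gives.

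The side computations giving $ra=0$ and the one with $y=x+a$ are never used and can be deleted.
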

Moreover, if the characteristic of $R$ is $2$ and the module $M$ is torsion-free, then we have a bijection
\begin{equation*}
\good(M,\ast) \cong \operatorname{Inv}(\rad(M))
\end{equation*} 
(see Corollary~\ref{cor: char2 implies good is inv of rad}).
Thus, the above theorem implies that there exists no good involution arising from the structure of torsion-free generalized symplectic quandles over an integral domain of characteristic $2$.
%%%

Next, we turn to our main result regarding the product of quandles.
As in the case of interaction-free unions, a complete decomposition of good involutions does not hold without additional assumptions on their components (see Example~\ref{eg: family of trivial qdle, product}).
To overcome this obstruction, we impose the connectedness of the components and finiteness of the family.
Under these conditions, we establish the following result (Theorem~\ref{main thm: good involutions on direct products}):
\begin{thm}\label{main 3: direct product}
Let $\{(Q_{\lambda},\ast_{\lambda})\}_{\lambda = 1,\ldots,n}$ be a finite family of nontrivial connected quandles
and $(\prod_{\lambda = 1}^{n}Q_{\lambda},\ast)$ denote their direct product.
Then there is a canonical bijection
\begin{equation*}
    \good(\prod_{\lambda = 1}^{n}Q_{\lambda})
    \cong 
    \prod_{\lambda = 1}^{n} \good(Q_{\lambda},\ast_{\lambda}).
\end{equation*}
\end{thm}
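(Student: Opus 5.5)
The plan is to build the map $(\rho_1,\dots,\rho_n)\mapsto \rho_1\times\cdots\times\rho_n$ and prove it is a bijection. Throughout I use the defining properties of a good involution $\rho$: $\rho(x\ast y)=\rho(x)\ast y$ and $x\ast\rho(y)=x\ast^{-1}y$. In terms of the right translations $S_y(x)=x\ast y$, these say that $\rho$ commutes with every $S_y$ and that $S_{\rho(y)}=S_y^{-1}$.

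\textbf{Well-definedness and injectivity.} In $P=\prod_{\lambda}Q_\lambda$ the translation splits as $S_{(y_\lambda)}=\prod_\lambda S_{y_\lambda}$. Hence if each $\rho_\lambda$ is a good involution, both identities hold coordinatewise and $\prod_\lambda\rho_\lambda$ is a good involution on $P$. Injectivity is immediate: the product map recovers each $\rho_\lambda$ as its action on the $\lambda$-th coordinate.

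\textbf{Surjectivity.} Fix a good involution $\rho$ on $P$ and a coordinate $i$, and put $f_i=\pi_i\circ\rho\colon P\to Q_i$. Two coordinate-level facts follow from the definitions.
\begin{itemize}
\item[(a)] $f_i$ is equivariant along $\pi_i$, that is, $f_i(x\ast y)=f_i(x)\ast_i y_i$.
\item[(b)] $S_{f_i(y)}=S_{y_i}^{-1}$ in $Q_i$, obtained by reading $S_{\rho(y)}=S_y^{-1}$ on the $i$-th coordinate.
\end{itemize}
The goal is to show that $f_i(x)$ depends only on $x_i$. Once this holds, $\rho_i(x_i):=f_i(x)$ is well defined and $\rho=\prod_i\rho_i$. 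Each $\rho_i$ is then an involution (because $\rho$ is) and satisfies the good-involution axioms by (a) and (b).

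\textbf{Independence from the other coordinates.} Take $x,x'\in P$ that differ only in a coordinate $j\neq i$. Since $Q_j$ is connected, $x'_j$ is obtained from $x_j$ by a word of translations by elements $b_1,\dots,b_k\in Q_j$. I realize each step inside $P$ by translating by $y$ with $y_j=b_s$ and $y_i=x_i$. Idempotency then fixes the $i$-th coordinate, and the remaining coordinates are handled the same way using idempotency. By (a), each step changes $f_i$ by $S_{x_i}^{\pm1}$. So $f_i(x')=S_{x_i}^{m}f_i(x)$ for some integer $m$, namely the signed count of steps.

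It remains to kill this twist. I expect this to be the \textbf{main obstacle}, and it is where nontriviality and (b) must enter; the trivial-quandle counterexample shows it can genuinely fail otherwise. My first attempt is to run the same path twice, using $y$ and then $\rho(y)$. Axiom (b) makes the second pass contribute $S_{x_i}^{\mp1}$, so the two contributions cancel. Concretely, I would replace each translation by $y$ with a translation by the element having $j$-coordinate $b_s$ and $i$-coordinate an element $z$ chosen so that the $i$-parts of the total twist cancel. Such a choice of $z$ exists by connectedness of $Q_i$ together with (b).

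If this cancellation is awkward, a fallback is to first show that $\rho$ maps each fiber $\pi_i^{-1}(a)$ into a single fiber $\pi_i^{-1}(a')$. For this I would use that $\rho$ commutes with $\operatorname{Inn}(P)$, together with nontriviality of $Q_j$ to rule out coordinate mixing. Finiteness of the family is used here so that any two points of $P$ are joined by changing finitely many coordinates one at a time.
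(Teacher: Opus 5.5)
You follow the paper's approach: the product map and its injectivity, reducing surjectivity to showing that $f_i=\pi_i\circ\rho$ depends only on $x_i$, and joining points of a fiber of $\pi_i$ by translations whose $i$-th coordinate is $x_i$ (other coordinates filled in by idempotency). That correctly gives $f_i(x')=S_{x_i}^{m}f_i(x)$.

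The gap is exactly the step you call the main obstacle. You never prove $f_i(x')=f_i(x)$, and the mechanisms you sketch do not work as stated.

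\emph{Pairing with $\rho(y)$.} Axiom (b) on $P$ itself gives $S_{\rho(y)}=S_y^{-1}$ on all of $P$. So a $\rho(y)$-step acts on every coordinate, including $j$, as the inverse of the $y$-step. It cannot correct the $i$-twist without also destroying the progress in coordinate $j$.

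\emph{Changing the $i$-coordinate to $z$.} A translation by an element with $i$-coordinate $z$ keeps you in the fiber only if $S_z^{\pm1}$ fixes $x_i$. The natural candidate from (b) is a $z$ with $S_z=S_{x_i}^{-1}$, and used uniformly it merely turns the twist into $S_{x_i}^{-m}$. You give no argument that a cancelling choice exists, and connectedness of $Q_i$ has no bearing on it.

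\emph{Fallback.} It only restates the claim that $\rho$ maps fibers of $\pi_i$ into fibers.

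The missing idea is one line, and it is precisely what the paper uses. By (a) with $y=x$ and idempotency,
\[
  f_i(x)\ast_i x_i \;=\; f_i(x\ast x) \;=\; f_i(x),
\]
so $f_i(x)$ is a fixed point of $S_{x_i}$, hence of $S_{x_i}^{m}$ for every $m\in\mathbb{Z}$, and therefore $f_i(x')=f_i(x)$. There is no twist to cancel.

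Your diagnosis of where the hypotheses enter is also off. Neither (b) nor nontriviality is what makes this step work: (b) is only needed afterwards, to check the second axiom for $\rho_i$, and the paper's proof never invokes nontriviality. The trivial-quandle example fails for a different reason: its factors are not connected, so the fiber paths do not exist (and there $S_{x_i}=\mathrm{id}$ anyway). With this line inserted, the rest of your proposal goes through.
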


\begin{cor}
Let $\{(Q_{\lambda},\ast_{\lambda})\}_{\lambda = 1,\ldots,n}$ be a finite family of nontrivial connected quandles.
Let $(\bigsqcup^{\mathrm{free}}_{\lambda =1,\ldots,n}Q_{\lambda},\ast_{\mathrm{free}})$ denote their interaction-free union 
and $(\prod_{\lambda = 1}^{n}Q_{\lambda},\ast)$ denote their direct product.
Then there is a canonical bijection
\begin{equation*}
    \good(\bigsqcup^{\mathrm{free}}_{\lambda = 1,\ldots,n} Q_{\lambda})
    \cong 
    \good(\prod_{\lambda = 1}^{n}Q_{\lambda})
    \cong 
    \prod_{\lambda = 1}^{n} \good(Q_{\lambda},\ast_{\lambda}).
\end{equation*}
\end{cor}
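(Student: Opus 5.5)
This corollary is an immediate combination of Theorem~\ref{main 1: interaction-free union} and Theorem~\ref{main 3: direct product}, so the plan is to obtain each of the two canonical bijections to the common product $\prod_{\lambda=1}^{n}\good(Q_\lambda,\ast_\lambda)$ and then compose one with the inverse of the other.

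First, apply Theorem~\ref{main 1: interaction-free union} with the index set $\Lambda=\{1,\ldots,n\}$. The hypotheses are exactly that each $Q_\lambda$ is nontrivial and connected; finiteness of $\Lambda$ is not needed there. This gives the canonical bijection
\[
\good\Bigl(\bigsqcup^{\mathrm{free}}_{\lambda=1,\ldots,n}Q_\lambda\Bigr)\cong\prod_{\lambda=1}^{n}\good(Q_\lambda,\ast_\lambda).
\]
Presumably the map sends a good involution $\rho$ to the tuple of restrictions $(\rho|_{Q_\lambda})_\lambda$. This makes sense once one knows that $\rho$ preserves each $Q_\lambda$. That should follow because each $Q_\lambda$ is a connected component (orbit) of the interaction-free union, and good involutions permute orbits.

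Second, apply Theorem~\ref{main 3: direct product} to the same finite family. This gives the canonical bijection
\[
\good\Bigl(\prod_{\lambda=1}^{n}Q_\lambda\Bigr)\cong\prod_{\lambda=1}^{n}\good(Q_\lambda,\ast_\lambda).
\]
Presumably this map sends $(\rho_\lambda)_\lambda$ to the componentwise involution $\prod_\lambda\rho_\lambda$.

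Composing the first bijection with the inverse of the second yields the middle bijection of the corollary. Explicitly, a good involution $\rho$ on the interaction-free union corresponds to $\prod_\lambda(\rho|_{Q_\lambda})$ on the direct product. This composite is canonical because both constituent bijections are.

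There is no real obstacle here. The only point to check is that the hypotheses of the two theorems coincide for a finite family of nontrivial connected quandles, which they do. All substantive difficulty, namely showing that good involutions preserve the components, or respect the product structure, resides in the two theorems being invoked.
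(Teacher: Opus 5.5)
Your proposal is correct and matches the paper: the corollary is the composite of the bijections of Theorem~\ref{main 1: interaction-free union} (with $\Lambda=\{1,\ldots,n\}$) and Theorem~\ref{main 3: direct product}, and the paper gives no further argument. One caution on your side remark: commuting with every $s_y$ only shows that $\rho$ permutes the components $Q_\lambda$, not that it fixes each one (for instance, two one-element components can be swapped). The paper rules out swaps by combining $s_{\rho(y)}=s_y^{-1}$ with $s_y|_{Q_\lambda}\neq\id_{Q_\lambda}$ for nontrivial connected $Q_\lambda$; since you only cite the theorem, this does not affect your proof.
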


Direct products of quandles preserve the connectedness of their direct factors, whereas they do not preserve the involutory property in general.
Exploiting this feature, as an application of Theorem~\ref{main 3: direct product}, we construct a connected noninvolutory quandle equipped with multiple good involutions, by taking the direct product of an involutory connected quandle and a connected noninvolutory quandle that admits a good involution (see Example~\ref{eg: counterexample to conjecture by Ta}).
Consequently, this provides a counterexample to the following conjecture proposed by Ta:
\begin{conj}[{\cite[Conjecture 11.8]{ta2025goodinvolutionsconjugationsubquandles}}]
Let $(Q,\ast)$ be a connected noninvolutory quandle.
Then we have $\lvert \good(Q,\ast) \rvert \leq 1$.
\end{conj}

%%%%%%%%%%%%%%%%%%%%%%%%%%%%%%%%%%%%%%%
\subsection{Outline}
This paper is organized as follows.
In Section~\ref{sec: quandle and good involutions}, we recall the notions of quandles and good involutions and review some of their properties.
In Section~\ref{sec: good involutions on interaction-free union}, we investigate how the set of good involutions on an interaction-free union of quandles decomposes into the Cartesian product of those on its components.
In Section~\ref{sec: symplectic quandle},
we introduce the notion of generalized symplectic quandles.
In Section~\ref{sec: good involutions on symplectic quandles},
we apply the decomposition result of interaction-free unions of quandles to the set of good involutions on generalized symplectic quandles.
In Section~\ref{sec: good involutions on product}, 
we investigate how the set of good involutions on a direct product of quandles decomposes into the Cartesian product of those on a direct factor and provide an example of a connected noninvolutory symmetric quandle equipped with multiple good involutions.

%%%%%%%%%%%%%%%%%%%%%%%%%%%%%%%%%%%%%%%
\section{Quandles and good involutions} \label{sec: quandle and good involutions}

\subsection{Quandles}

\begin{defin}[{\cite{MR638121,MR672410}}]\label{def: quandle}
A \emph{quandle} is a set $Q$ equipped with a binary operation $\ast \colon Q \times Q \to Q$ satisfying:
\begin{enumerate}
    \item $x\ast x = x$ for all $x \in Q$,
    \item the map $s_{y}\colon Q \to Q$ defined by $x \mapsto x \ast y$ is invertible for all $y \in Q$,
    \item $(x \ast y) \ast z = (x \ast z) \ast (y \ast z)$ for all $x,y,z \in Q$. 
\end{enumerate}
\end{defin}

The quandle axioms correspond to the Reidemeister moves from knot theory.

\begin{remark} \label{remark: dual quandle}
Let $(Q,\ast)$ be a quandle.
The second axiom implies that there exists a \emph{dual} (or \emph{inverse}) operation $\ast^{-1} \colon Q \times Q \to Q$ satisfying for all $x,y \in Q$, $x \ast^{-1}y \coloneqq s_{y}^{-1}(x)$.
Moreover, the pair $(Q,\ast^{-1})$ is also a quandle, which is called the \emph{dual quandle} of $(Q,\ast)$. 
\end{remark}

From an algebraic perspective, quandles can be viewed as a generalization of the conjugation operation in a group.
Consequently, several notions appearing in group theory can be defined analogously in quandle theory.

\begin{defin}
Let $(Q_{1},\ast_{1}), (Q_{2},\ast_{2})$ be quandles.
\begin{itemize}
    \item A map $f\colon Q_{1} \to Q_{2}$ is a \emph{quandle homomorphism} if it satisfies $f(x \ast_{1} y) = f(x) \ast_{2} f(y)$ for all $x,y \in Q_{1}$.
    \item A map $f\colon Q_{1} \to Q_{2}$ is a \emph{quandle isomorphism} if it is a bijective quandle homomorphism.
        $(Q_{1},\ast_{1})$ is \emph{isomorphic} to $(Q_{2},\ast_{2})$ if there exists a quandle isomorphism from $Q_{1}$ to $Q_{2}$.
\end{itemize}
\end{defin}

\begin{defin}
Let $(Q,\ast)$ be a quandle and $(Q,\ast^{-1})$ the dual quandle.
\begin{itemize}
    \item The set of all quandle isomorphisms from $Q$ to itself is denoted by $\operatorname{Aut}(Q,\ast)$. $\operatorname{Aut}(Q,\ast)$ is a group under the binary operation $fg \coloneqq g \circ f$, which is called the \emph{automorphism group}.
    \item The \emph{inner automorphism group} $\operatorname{Inn}(Q,\ast)$ of $(Q,\ast)$ is a subgroup of $\operatorname{Aut}(Q,\ast)$ generated by $\{s_{y} \mid y \in Q\}$.
    \item A map $f \colon Q \to Q$ is an \emph{antiautomorphism} if it is a quandle homomorphism from $(Q,\ast)$ to $(Q,\ast^{-1})$. $(Q,\ast)$ is \emph{self-dual} if there exists an antiautomorphism of $(Q,\ast)$.
\end{itemize}
\end{defin}

We provide some important classes of quandles.
\begin{defin}
Let $(Q,\ast)$ be a quandle.
\begin{itemize}
    \item $(Q,\ast)$ is \emph{involutory} or a \emph{kei} if $\ast = \ast^{-1}$ (cf. \cite{MR21002}).
    \item $(Q,\ast)$ is \emph{homogeneous} if the automorphism group $\operatorname{Aut}(Q,\ast)$ acts transitively on $Q$.
    \item $(Q,\ast)$ is \emph{connected} if the inner automorphism group $\operatorname{Inn}(Q,\ast)$ acts transitively on $Q$.
    \item $(Q,\ast)$ is \emph{faithful} if $s_{x} = s_{y}$ implies $x = y$ for every $x,y \in Q$.
\end{itemize}
\end{defin}

The following examples of quandles play an important role in the present paper.

\begin{example}\label{eg: trivial quandle}
Let $Q$ be a nonempty set.
Define the binary operation $\ast \colon Q \times Q \to Q$ by $x \ast y \coloneqq x$. 
The pair $(Q,\ast)$ is a quandle, which is called a \emph{trivial quandle}.
Note that every trivial quandle is involutory.
\end{example}

\begin{defin}[{\cite{MR2493966}}]\label{def: maximal trivial component}
Let $(Q,\ast)$ be a quandle.
Define the set $D$ by 
\begin{equation*}
    D \coloneqq 
    \{ x \in Q \mid x\ast y = x, y \ast x = y \text{ for all }y \in Q\}.
\end{equation*}
Then $D$ is a subquandle of $Q$, which is itself a trivial quandle.
We call $D$ the \emph{maximal trivial component} of $Q$.
\end{defin}

\begin{example}\label{eg: conjugation quandle}
Let $G$ be a group and $X$ a conjugacy class of $G$.
Define the binary operation $\ast \colon X \times X \to X$ by $g\ast h \coloneqq h^{-1}gh$.
The pair $(X,\ast)$ is a quandle, which is called a \emph{conjugation (sub)quandle} and denoted by $\operatorname{Conj}(X)$.
\end{example}

\subsection{Good involutions}

\begin{defin}[{\cite{MR2371714,MR2657689}}]\label{def: good involutions}
Let $(Q,\ast)$ be a quandle. 
An involution $\rho \colon Q \to Q$ is a \emph{good involution} if it satisfies the following conditions:
\begin{enumerate}
    \item $\rho(x\ast y) = \rho(x) \ast y$ for any $x, y \in Q$,
    \item $x \ast \rho(y) = x \ast^{-1} y$ for any $x,y \in Q$.
\end{enumerate}
A \emph{symmetric quandle} is a quandle $(Q,\ast)$ equipped with a good involution $\rho$ on $(Q,\ast)$.
The set of good involutions on $(Q,\ast)$ is denoted by $\good(Q,\ast)$.
\end{defin}

Symmetric quandles are a useful tool to define invariants of knots that may not be oriented or orientable.
The following are some known examples of symmetric quandles.

\begin{example}
    Let $(Q,\ast)$ be a trivial quandle.
    Every involution on $Q$ is a good involution on $(Q,\ast)$, i.e. $\good(Q,\ast) = \operatorname{Inv}(Q)$, where $\operatorname{Inv}(Q)$ denotes the set of all involutions on $Q$. 
\end{example}

\begin{example}
    Let $(Q,\ast)$ be an involutory quandle.
    The identity map on $Q$ is a good involution on $(Q,\ast)$, i.e. $\id_{Q} \in \good(Q,\ast)$. 
\end{example}

\begin{example}[cf.{\cite[Section 4.1]{ta2025goodinvolutionsconjugationsubquandles}}]
    Let $X$ be a conjugacy class of a group $G$. Suppose $X$ is closed under inversion (i.e. $X = X^{-1}$).
    The map $\iota \colon X \to X$ defined by $g \mapsto g^{-1}$ is a good involution on the conjugation quandle $\operatorname{Conj}(X)$.
\end{example}

The existence of good involutions on a given quandle is not trivial.
We list some known results from the literature.

\begin{prop}
Let $(Q,\ast)$ be a quandle.
A good involution on $(Q,\ast)$ is an antiautomorphism of $(Q,\ast)$.
In particular, if $(Q,\ast)$ is not self-dual, then $\good(Q,\ast)= \emptyset$, i.e. $(Q,\ast)$ has no good involution.
\end{prop}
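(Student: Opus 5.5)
The claim has two parts: a good involution $\rho$ is a homomorphism $(Q,\ast)\to(Q,\ast^{-1})$, and non-self-dual quandles admit none. Both follow directly from the two axioms in Definition~\ref{def: good involutions}, so the plan is a short computation.

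For the first part, fix $x,y\in Q$. We want to show
\begin{equation*}
\rho(x\ast y)=\rho(x)\ast^{-1}\rho(y).
\end{equation*}
By condition (1),
\begin{equation*}
\rho(x\ast y)=\rho(x)\ast y.
\end{equation*}
It therefore suffices to show $z\ast y = z\ast^{-1}\rho(y)$ for all $z,y\in Q$. Apply condition (2) with $\rho(y)$ in place of $y$:
\begin{equation*}
z\ast\rho(\rho(y)) = z\ast^{-1}\rho(y).
\end{equation*}
Since $\rho$ is an involution, $\rho(\rho(y))=y$, so the left side is $z\ast y$. Taking $z=\rho(x)$ gives
\begin{equation*}
\rho(x\ast y)=\rho(x)\ast y=\rho(x)\ast^{-1}\rho(y),
\end{equation*}
so $\rho$ is an antiautomorphism. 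Bijectivity is automatic because $\rho=\rho^{-1}$, although the paper's definition of antiautomorphism only asks for a homomorphism into the dual quandle.

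For the second part, the existence of a good involution produces an antiautomorphism by the first part. Hence $(Q,\ast)$ is self-dual. Contrapositively, if $(Q,\ast)$ is not self-dual, then $\good(Q,\ast)=\emptyset$.

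The only real step is the substitution $y\mapsto\rho(y)$ in condition (2), which needs $\rho^2=\id$; everything else is direct unwinding of definitions.
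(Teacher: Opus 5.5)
Your proof is correct. Condition (1) gives $\rho(x\ast y)=\rho(x)\ast y$. Condition (2) applied to $\rho(y)$, together with $\rho^{2}=\id_{Q}$, gives $\rho(x)\ast y=\rho(x)\ast^{-1}\rho(y)$. The statement about non-self-dual quandles is then just the contrapositive.

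The paper does not prove this proposition; it lists it among known facts from the literature. So there is no paper proof to compare against, and your direct verification from the definitions is the natural one.

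One minor refinement: the homomorphism property does not actually need $\rho^{2}=\id_{Q}$. Condition (2) says $s_{\rho(y)}=s_{y}^{-1}$ as maps. Inverting both sides gives $z\ast^{-1}\rho(y)=z\ast y$ for all $z,y\in Q$ directly. Involutivity is only used for bijectivity, and the paper's definition of antiautomorphism does not require bijectivity.
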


\begin{prop}[{\cite[Proposition 3.1]{MR2657689}}]
Let $(Q,\ast)$ be a quandle.
Every involution on $Q$ is a good involution on $(Q,\ast)$ if and only if $(Q,\ast)$ is a trivial quandle.
\end{prop}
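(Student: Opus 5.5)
The statement to prove is: every involution on $Q$ is a good involution if and only if $(Q,\ast)$ is trivial. I would prove the two implications separately, working directly from Definition~\ref{def: good involutions}.

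For the \emph{if} direction, assume $(Q,\ast)$ is trivial, so $x\ast y = x$ for all $x,y$. Then $s_y=\id_Q$ for every $y$, hence $x\ast^{-1}y = x$ as well. Take any involution $\rho$ on $Q$. Condition (1) reads $\rho(x\ast y)=\rho(x)=\rho(x)\ast y$, which holds. Condition (2) reads $x\ast\rho(y)=x=x\ast^{-1}y$, which also holds. So $\rho$ is good; this is exactly the earlier example $\good(Q,\ast)=\operatorname{Inv}(Q)$.

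For the \emph{only if} direction, assume every involution is good. First, $\id_Q$ is an involution, so it is good. Condition (2) for $\id_Q$ gives $x\ast y = x\ast^{-1} y$, i.e.\ $Q$ is involutory.

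Next, fix distinct $a,b\in Q$ and let $\tau$ be the transposition swapping $a$ and $b$ and fixing every other point; $\tau$ is an involution, hence good. Condition (1) with $x=a$, $y=b$ gives
\[
\tau(a\ast b)=\tau(a)\ast b = b\ast b = b .
\]
Applying $\tau$ to both sides yields $a\ast b=\tau(b)=a$. Since $a\neq b$ were arbitrary, and $a\ast a=a$ by idempotency, we get $x\ast y=x$ for all $x,y$, so $(Q,\ast)$ is trivial. If $|Q|=1$ there is nothing to prove, since the quandle is then trivial.

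There is no real obstacle. The only step needing care is choosing the right test involution, namely the transposition of $a$ and $b$, and then using condition (1) rather than condition (2). Using condition (2) with $\tau$ would give only $x\ast a = x\ast^{-1} b$ for all $x$, and together with involutivity $s_a=s_b$. That is enough only when $Q$ is faithful, so I would rely on condition (1), which needs no faithfulness assumption.
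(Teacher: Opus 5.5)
Your proof is correct and complete. The paper gives no proof of its own here; it only quotes the result from Kamada--Oshiro \cite{MR2657689}, so there is no route to compare against, and your direct check from Definition~\ref{def: good involutions} suffices.

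Two small corrections to the write-up:
\begin{enumerate}
\item The step deducing that $Q$ is involutory from $\id_Q$ being good is never used. Condition (1) for the transposition $\tau$ already gives $a\ast b=a$ on its own, so that step can be dropped.
\item Your closing claim that condition (2) would only work for faithful $Q$ is wrong. Condition (2) for $\tau$ with $y=b$ gives $x\ast a=x\ast^{-1}b$ for all $x$. Taking $x=a$ yields $a=a\ast a=a\ast^{-1}b$, hence $a\ast b=a$. Equivalently, evaluate your identity $s_a=s_b$ at the point $a$. So either axiom of a good involution, applied to the transposition, forces triviality, with no faithfulness hypothesis.
\end{enumerate}
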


\begin{prop}[{\cite[Proposition 3.4]{MR2657689}}]
Let $(Q,\ast)$ be a quandle.
The following are equivalent:
\begin{enumerate}
    \item $(Q,\ast)$ is involutory.
    \item The identity map on $Q$ is a good involution on $(Q,\ast)$.
    \item $(Q,\ast)$ has a good involution which is a quandle homomorphism.
    \item Any good involution on $(Q,\ast)$ is a quandle homomorphism, and $\good(Q,\ast) \neq \emptyset$.
\end{enumerate}
\end{prop}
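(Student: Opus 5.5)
We prove the four conditions equivalent by the chain $(1)\Rightarrow(2)\Rightarrow(3)\Rightarrow(1)$, and then connect $(4)$ to $(3)$ and $(1)$. The basic observation is that condition~(2) in the definition of a good involution, $x\ast\rho(y)=x\ast^{-1}y$, reads $s_{\rho(y)}=s_y^{-1}$ for every $y\in Q$.

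For $(1)\Rightarrow(2)$: the identity map is an involution, and condition~(1) of a good involution holds trivially for it. Condition~(2) becomes $x\ast y=x\ast^{-1}y$ for all $x,y$, which is exactly the involutory hypothesis $\ast=\ast^{-1}$.

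For $(2)\Rightarrow(3)$: the identity map is a quandle homomorphism.

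For $(3)\Rightarrow(1)$: let $\rho$ be a good involution that is also a quandle homomorphism. Then
\[
\rho(x)\ast\rho(y)=\rho(x\ast y)=\rho(x)\ast y,
\]
so $s_{\rho(y)}(\rho(x))=s_y(\rho(x))$. Since $\rho$ is a bijection, $\rho(x)$ ranges over all of $Q$, hence $s_{\rho(y)}=s_y$. Combining this with $s_{\rho(y)}=s_y^{-1}$ gives $s_y=s_y^{-1}$ for every $y$, that is, $\ast=\ast^{-1}$.

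For $(4)\Rightarrow(3)$: since $\good(Q,\ast)\neq\emptyset$, pick any $\rho\in\good(Q,\ast)$; by assumption it is a homomorphism, which is~(3).

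For $(1)\Rightarrow(4)$: nonemptiness follows from $(1)\Rightarrow(2)$. Now let $\rho$ be an arbitrary good involution. From $s_{\rho(y)}=s_y^{-1}$ and the involutory hypothesis $s_y^{-1}=s_y$, we get $s_{\rho(y)}=s_y$. Therefore
\[
\rho(x\ast y)=\rho(x)\ast y=\rho(x)\ast\rho(y),
\]
so $\rho$ is a homomorphism.

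The only step requiring any care is $(3)\Rightarrow(1)$, where surjectivity of $\rho$ is used to pass from an identity evaluated at the points $\rho(x)$ to an equality of the maps $s_{\rho(y)}$ and $s_y$ on all of $Q$.
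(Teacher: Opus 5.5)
Your proof is correct and complete: the cycle $(1)\Rightarrow(2)\Rightarrow(3)\Rightarrow(1)$, together with $(4)\Rightarrow(3)$ and $(1)\Rightarrow(4)$, covers every equivalence, and in $(3)\Rightarrow(1)$ you rightly use the bijectivity of $\rho$ to pass from $s_{\rho(y)}\circ\rho=s_y\circ\rho$ to $s_{\rho(y)}=s_y$. The paper does not prove this result itself but quotes it from Kamada--Oshiro, and your direct verification, which rewrites the second axiom as $s_{\rho(y)}=s_y^{-1}$, is the standard argument for it.
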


\begin{prop}[{\cite[Proposition 5.1]{ta2025goodinvolutionsconjugationsubquandles}}]
Let $(Q,\ast)$ be a quandle.
If $(Q,\ast)$ is faithful, then $\lvert \good(Q,\ast) \rvert \leq 1$.
\end{prop}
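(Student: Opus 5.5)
The plan is to compare the two good involutions directly. Let $(Q,\ast)$ be faithful and suppose $\rho,\sigma \in \good(Q,\ast)$; we aim to show $\rho = \sigma$. Condition (2) of Definition~\ref{def: good involutions} says that for every $y \in Q$ and every $x \in Q$,
\begin{equation*}
    x \ast \rho(y) = x \ast^{-1} y = x \ast \sigma(y).
\end{equation*}
In terms of right translations this reads $s_{\rho(y)} = s_{y}^{-1} = s_{\sigma(y)}$ as maps $Q \to Q$.

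Faithfulness then does the rest. Since $s_{\rho(y)} = s_{\sigma(y)}$, faithfulness forces $\rho(y) = \sigma(y)$. As $y$ was arbitrary, $\rho = \sigma$. Hence $\good(Q,\ast)$ has at most one element, i.e.\ $\lvert \good(Q,\ast) \rvert \leq 1$.

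There is no real obstacle here; only condition (2) is used, and condition (1) is not needed. The one point to check is that condition (2) determines each $s_{\rho(y)}$ completely. It does, since it holds for all $x$, so it identifies $s_{\rho(y)}$ with the fixed bijection $s_y^{-1}$, independently of the choice of good involution. A slightly stronger formulation is also available: for each $y$, $\rho(y)$ is the unique element $z$ with $s_z = s_y^{-1}$ whenever such an element exists.
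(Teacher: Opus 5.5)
Your argument is correct: condition (2) of Definition~\ref{def: good involutions} says exactly that $s_{\rho(y)} = s_{y}^{-1}$ for every good involution $\rho$. Hence any two good involutions $\rho,\sigma$ satisfy $s_{\rho(y)} = s_{\sigma(y)}$ for all $y$, and faithfulness gives $\rho = \sigma$.

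The paper itself gives no proof of this proposition; it is quoted from Ta. Your route is the standard one, and it rests on the same identification $s_{\rho(y)} = s_{y}^{-1}$ that the paper uses in the proof of Lemma~\ref{lem: qdle operation is not identity implies decomposition}.
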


%%%%%%%%%%%%%%%%%%%%%%%%%%%%%%%%%%%%%%%
\section{Good involutions on an interaction-free union of quandles} \label{sec: good involutions on interaction-free union}

Given a family of quandles, one can construct a quandle structure on their disjoint union; this construction is called an \emph{interaction-free union of quandles}.
Although this construction might have been known previously, the terminology \emph{interaction-free union} was first introduced by Kubo, Nagashiki, Okuda, and Tamaru \cite{MR4410185}.

In this section, we investigate the relationship between good involutions on an interaction-free union of quandles and those on each component.

\begin{defin}\label{def: interaction-free union}
Let $\{(Q_{\lambda}, \ast_{\lambda})\}_{\lambda \in \Lambda}$ be a family of quandles. 
We define a binary operation $\ast_{\mathrm{free}}$ on the disjoint union $\bigsqcup_{\lambda \in \Lambda}Q_{\lambda}$ by 
\begin{equation*}
    x \ast_{\mathrm{free}} y 
    \coloneqq 
    \begin{cases*}
        x \ast_{\lambda} y & if $x,y \in Q_{\lambda}$ for some $\lambda \in \Lambda$, \\
        x & otherwise.
    \end{cases*}
\end{equation*}
The pair $(\bigsqcup_{\lambda \in \Lambda}Q_{\lambda}, \ast_{\mathrm{free}})$ is a quandle, which is called an \emph{interaction-free union of quandles} and denoted by $\bigsqcup^{\mathrm{free}}_{\lambda \in \Lambda} Q_{\lambda}$.
\end{defin}

We first show that every quandle decomposes as an interaction-free union of its maximal trivial component (see Definition~\ref{def: maximal trivial component}) and its complement.

\begin{lemma}\label{lem: interaction-free of maximal trivial component}
Let $(Q,\ast)$ be a quandle and $D$ denote the maximal trivial component of $Q$.
If $D \neq \emptyset$ and $Q\setminus D \neq \emptyset$, then $(Q,\ast)$ decomposes as an interaction-free union
\begin{equation*}
    (Q,\ast) = (Q\setminus D) \bigsqcup^{\mathrm{free}} D.
\end{equation*}
\end{lemma}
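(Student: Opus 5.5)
We must show that the operation $\ast$ on $Q$ coincides with $\ast_{\mathrm{free}}$ on $(Q\setminus D)\sqcup D$. Before that, we need both pieces to be subquandles, so that the interaction-free union of Definition~\ref{def: interaction-free union} makes sense with the restricted operations.

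\textbf{Step 1: $D$ is a subquandle.} This is recorded in Definition~\ref{def: maximal trivial component}. Directly, for $x,y\in D$ we have $x\ast y = x\in D$, and the restriction of $\ast$ to $D$ is the trivial operation.

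\textbf{Step 2: $Q\setminus D$ is a subquandle.} This is the only step with real content. Take $x,y\in Q\setminus D$; we must show $x\ast y\notin D$. Suppose instead that $z \coloneqq x\ast y\in D$.
\begin{itemize}
    \item Since $y\notin D$, we have $z\ast y = z$. Hence $s_y(x)=z=s_y(z)$.
    \item Injectivity of $s_y$ then forces $x=z\in D$, a contradiction.
\end{itemize}
We also need $Q\setminus D$ to be closed under $\ast^{-1}$, so that it is a subquandle in the full sense. Suppose $w\coloneqq x\ast^{-1}y\in D$. Then $x = w\ast y = w \in D$, again a contradiction. Bijectivity of each $s_y$ restricted to $Q\setminus D$ follows, and the remaining quandle axioms are inherited from $Q$. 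Both pieces are nonempty by hypothesis.

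\textbf{Step 3: the operations agree.} Compare $x\ast y$ with $x\ast_{\mathrm{free}} y$ case by case.
\begin{itemize}
    \item If $x,y$ lie in the same piece, $\ast_{\mathrm{free}}$ is by definition the restriction of $\ast$, so the two agree.
    \item If $x\in D$ and $y\in Q\setminus D$, then $x\ast y = x$ by the definition of $D$ (first condition).
    \item If $x\in Q\setminus D$ and $y\in D$, then $x\ast y = x$ by the definition of $D$ (second condition, applied with the roles of the two elements swapped).
\end{itemize}
In the mixed cases this matches $x\ast_{\mathrm{free}} y = x$. Hence $(Q,\ast)$ equals the interaction-free union $(Q\setminus D)\sqcup^{\mathrm{free}} D$.

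The main obstacle is Step 2, the closure of the complement $Q\setminus D$. It is handled by the injectivity of $s_y$ together with the fact that elements of $D$ are fixed by every $s_y$.
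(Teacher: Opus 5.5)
Your proof is correct and follows the same two-part outline as the paper. You show that $Q\setminus D$ is closed under $\ast$ and $\ast^{-1}$, then compare $\ast$ with $\ast_{\mathrm{free}}$ case by case; your Step~3 is essentially the paper's case check.

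Where you differ is the justification of closure. The paper handles it in one line by asserting that $y\in Q\setminus D$ forces $s_y\neq\id_Q$. You instead use injectivity of $s_y$ together with the fact that every point of $D$ is fixed by every $s_y$.

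Your route is the sound one, because the paper's premise fails in general. Take $Q=\{a,b,c,d\}$ with $s_a=s_b=s_d=\id_Q$ and $s_c$ the transposition of $a$ and $b$. This is a quandle with $D=\{d\}$, yet $a\notin D$ and $s_a=\id_Q$. So the closure step genuinely needs an argument like yours.

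One small slip: in Step~2, the equality $z\ast y=z$ holds because $z\in D$ (first defining condition of $D$), not because $y\notin D$. The hypothesis $y\notin D$ is in fact never used in that step.
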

\begin{proof}
First, we note that $Q \setminus D$ forms a subquandle of $Q$.
Indeed, since $y \in Q \setminus D$ implies that $s_{y} \neq \id_{Q}$, it follows that $x, y \in Q \setminus D$ implies that $x \ast y \notin D$ and $x \ast^{-1} y \notin D$.

To show $(Q,\ast) = (Q\setminus D) \bigsqcup^{\mathrm{free}} D$, it suffices to verify that $x \ast y = x \ast_{\mathrm{free}} y$ for all $x, y \in Q$.
We check this in the following disjoint cases:
\begin{enumerate}
    \item If $x, y \in Q \setminus D$, then $x \ast_{\mathrm{free}} y = x \ast y$ by the definition of interaction-free unions.
    \item If $x, y \in D$, then $x \ast_{\mathrm{free}} y = x \ast y = x$ by the definition of $D$.
    \item If $x \in D$ and $y \in Q \setminus D$ or $x \in Q \setminus D$ and $y \in D$, then $x \ast y = x$ by the definition of $D$. Hence $x \ast_{\mathrm{free}} y = x = x \ast y$.
\end{enumerate}
Therefore, $x \ast y = x \ast_{\mathrm{free}} y$ holds for all $x, y \in Q$, which completes the proof.
\end{proof}

We then show the first main result.
\begin{thm}\label{main thm: decomp of maximal trivial component}
Let $(Q,\ast)$ be a quandle and $D$ denote the maximal trivial component of $Q$.
If $D \neq \emptyset$ and $Q\setminus D \neq \emptyset$, then we have the decomposition of good involutions on $(Q,\ast)$:
\begin{equation*}
    \good(Q,\ast) 
    \cong 
    \good(Q\setminus D,\ast) \times \operatorname{Inv}(D).
\end{equation*}
\end{thm}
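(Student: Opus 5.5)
The plan is to show that every good involution $\rho$ on $(Q,\ast)$ preserves both $D$ and $Q\setminus D$. Restricting $\rho$ to the two pieces then gives the map $\rho \mapsto (\rho|_{Q\setminus D}, \rho|_D)$. Gluing an arbitrary pair back together gives the inverse map.

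\emph{Step 1: $\rho$ preserves $D$.} Let $x \in D$ and $\rho \in \good(Q,\ast)$. For any $y \in Q$, condition (1) gives
\begin{equation*}
\rho(x)\ast y = \rho(x\ast y) = \rho(x).
\end{equation*}
Condition (2) gives $y \ast \rho(x) = y \ast^{-1} x$. Since $s_x = \id_Q$, we have $s_x^{-1} = \id_Q$, so $y \ast^{-1} x = y$. Hence $\rho(x)$ satisfies both defining conditions of $D$, and $\rho(D)\subseteq D$. Because $\rho$ is an involution, it follows that $\rho(D)=D$, and so $\rho(Q\setminus D)=Q\setminus D$ as well.

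\emph{Step 2: the restrictions are good.} The restriction $\rho|_D$ is an involution of $D$, so it lies in $\operatorname{Inv}(D)$. The restriction $\rho|_{Q\setminus D}$ is an involution of the subquandle $Q\setminus D$ (a subquandle by Lemma~\ref{lem: interaction-free of maximal trivial component}). Conditions (1) and (2) for this restriction are instances of the same identities in $Q$. Here we use that the dual operation on the subquandle is the restriction of $\ast^{-1}$, since $s_y$ preserves $Q\setminus D$.

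\emph{Step 3: gluing.} Conversely, take $\sigma \in \good(Q\setminus D,\ast)$ and $\tau \in \operatorname{Inv}(D)$, and let $\rho = \sigma \sqcup \tau$. This is an involution of $Q$. Using the interaction-free decomposition of Lemma~\ref{lem: interaction-free of maximal trivial component}, I check (1) and (2) by cases on where $x$ and $y$ lie.
\begin{itemize}
\item If $x,y\in Q\setminus D$, both conditions are the goodness of $\sigma$.
\item If $y\in D$, then $s_y$ and $s_{\rho(y)}$ are both the identity, since $\rho(y)\in D$. Both sides of (1) reduce to $\rho(x)$, and both sides of (2) reduce to $x$.
\item If $x\in D$ and $y\notin D$, then $x\ast y = x$, so (1) reads $\tau(x) = \tau(x)\ast y$. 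This holds since $\tau(x)\in D$. For (2), $x\ast\rho(y) = x = x\ast^{-1}y$, again because $x\in D$.
\end{itemize}
The two constructions are clearly mutually inverse, which gives the canonical bijection.

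The only step with real content is Step 1, the invariance of $D$ under any good involution. It rests on condition (2) transferring the triviality of $s_x$ to $s_{\rho(x)}$. The remaining steps are routine case checks.
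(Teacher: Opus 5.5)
Your proposal is correct and follows essentially the same route as the paper. You show $\rho(D)\subseteq D$ from the two good-involution axioms, upgrade this to $\rho(D)=D$ using $\rho^{2}=\id_{Q}$, and then restrict and glue, checking the axioms by cases, where you are slightly more explicit than the paper about why $\rho|_{Q\setminus D}$ is a good involution on the subquandle.
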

\begin{proof}
Since $D$ is a trivial quandle, any involution on $D$ is trivially a good involution, i.e. $\operatorname{Good}(D,\ast) = \operatorname{Inv}(D)$.

Fix $\rho \in \good(Q,\ast)$ and $x \in D$.
For every $y \in Q$, the definition of good involutions implies that 
\begin{equation*}
    \rho(x) \ast y = \rho(x\ast y) = \rho(x),
\end{equation*}
and 
\begin{equation*}
    y \ast \rho(x) = y \ast^{-1}x = y,
\end{equation*}
i.e. $\rho(x) \in D$.
Thus, $\rho(D) \subseteq D$.
Since $\rho$ is an involution, by applying $\rho$ to both sides of $\rho(D) \subseteq D$, we have $D = \rho^{2}(D) \subseteq \rho(D) \subseteq D$.
Thus, we obtain $\rho(D) = D$ and $\rho(Q\setminus D) = Q\setminus D$ for all $\rho \in \good(Q,\ast)$.

We may define the map 
\begin{equation*}
    \Phi \colon \good(Q,\ast) \to \good(Q\setminus D,\ast) \times \operatorname{Inv}(D)
\end{equation*} by 
\begin{equation*}
    \Phi(\rho) \coloneqq (\rho|_{Q\setminus D}, \rho|_{D}).
\end{equation*}
The above argument ensures that $\Phi$ is well-defined.
By construction, $\Phi$ is clearly injective.

It remains to show that $\Phi$ is surjective.
For all $\alpha \in \good(Q\setminus D,\ast)$ and all $\beta \in \operatorname{Inv}(D)$, we define the map $\rho \colon Q \to Q$ by
\begin{equation*}
    \rho(x)
    \coloneqq 
    \begin{cases*}
        \alpha(x) & if $x \in Q \setminus D$, \\
        \beta(x) & if $x \in D$.
    \end{cases*}
\end{equation*}
Since $\alpha,\beta$ are involutions, $\rho^{2} = \id_{Q}$, i.e. $\rho$ is an involution.
If $x, y \in Q\setminus D$, then 
\begin{align*}
    &\rho(x) \ast y = \alpha(x) \ast y = \alpha(x\ast y) = \rho(x\ast y), \\
    &x \ast \rho(y) = x \ast \alpha(y) = x \ast^{-1}y.
\end{align*}
If $x \in D$ and $y \in Q$, then 
\begin{align*}
    &\rho(x) \ast y = \beta(x) \ast y = \beta(x) = \beta(x\ast y) = \rho(x\ast y), \\
    &x \ast \rho(y) = x = x \ast^{-1}y.
\end{align*}
If $x \in Q\setminus D$ and $y \in D$, then 
\begin{align*}
    &\rho(x) \ast y = \rho(x) = \rho(x\ast y), \\
    &x \ast \rho(y) = x  = x \ast^{-1}y.
\end{align*}
Thus, $\rho$ is a good involution on $(Q,\ast)$, which proves the surjectivity of $\Phi$.
Consequently, $\Phi$ is the desired bijection.
\end{proof}

Next, we compare the set of good involutions on an interaction-free union of quandles with that on each component.

\begin{prop} \label{prop: inclusion of decomposition interaction-free union}
Let $\{(Q_{\lambda}, \ast_{\lambda})\}_{\lambda \in \Lambda}$ be a given family of quandles 
and $\bigsqcup^{\mathrm{free}}_{\lambda \in \Lambda} Q_{\lambda}$ denote their interaction-free union.
Then there is a canonical inclusion 
\begin{equation*}
    \prod_{\lambda \in \Lambda} \good(Q_{\lambda},\ast_{\lambda})  \hookrightarrow \good(\bigsqcup^{\mathrm{free}}_{\lambda \in \Lambda} Q_{\lambda}).
\end{equation*}
\end{prop}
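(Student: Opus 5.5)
We will define the map $\Psi \colon \prod_{\lambda \in \Lambda} \good(Q_{\lambda},\ast_{\lambda}) \to \good(\bigsqcup^{\mathrm{free}}_{\lambda \in \Lambda} Q_{\lambda})$ by gluing. Given a family $(\rho_{\lambda})_{\lambda \in \Lambda}$, set $\rho(x) \coloneqq \rho_{\lambda}(x)$ for $x \in Q_{\lambda}$. Since the $Q_{\lambda}$ are disjoint and each $\rho_{\lambda}$ preserves $Q_{\lambda}$, $\rho$ is a well-defined map on the disjoint union. It is an involution because $\rho^{2}|_{Q_{\lambda}} = \rho_{\lambda}^{2} = \id_{Q_{\lambda}}$. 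Injectivity of $\Psi$ is immediate, since $\rho_{\lambda} = \rho|_{Q_{\lambda}}$ recovers each component.

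The main step is to check that $\rho$ satisfies the two axioms of Definition~\ref{def: good involutions} for $\ast_{\mathrm{free}}$. First we record that the dual operation of the interaction-free union is again given componentwise. If $x,y \in Q_{\lambda}$, then $x \ast_{\mathrm{free}}^{-1} y = x \ast_{\lambda}^{-1} y$. If $x$ and $y$ lie in different components, then $x \ast_{\mathrm{free}}^{-1} y = x$. Both hold because $s_{y}$ on the union restricts to $s_{y}^{\lambda}$ on $Q_{\lambda}$ and to the identity elsewhere, so its inverse has the same form.

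We then split into two cases.
\begin{enumerate}
    \item If $x,y \in Q_{\lambda}$, then $\rho(x), \rho(y) \in Q_{\lambda}$, and both axioms reduce to the corresponding axioms for $\rho_{\lambda}$ on $(Q_{\lambda},\ast_{\lambda})$.
    \item If $x \in Q_{\lambda}$ and $y \in Q_{\mu}$ with $\lambda \neq \mu$, then $\rho(x) \in Q_{\lambda}$ and $\rho(y) \in Q_{\mu}$ stay in different components. Hence
    \begin{equation*}
        \rho(x \ast_{\mathrm{free}} y) = \rho(x) = \rho(x) \ast_{\mathrm{free}} y,
    \end{equation*}
    and
    \begin{equation*}
        x \ast_{\mathrm{free}} \rho(y) = x = x \ast_{\mathrm{free}}^{-1} y.
    \end{equation*}
\end{enumerate}
Thus $\rho$ is a good involution, and $\Psi$ is the desired canonical injection.

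The only point requiring care is that each $\rho_{\lambda}$ maps $Q_{\lambda}$ into itself, which keeps the case analysis valid. We do not expect a real obstacle. The canonicity of $\Psi$ is clear, as no choices are made in the gluing.
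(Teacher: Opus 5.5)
Your proposal is correct and follows essentially the same route as the paper: you glue the $\rho_{\lambda}$ into a single map, check that it is an involution, and verify both good-involution axioms by splitting into the same-component and different-component cases. You also make explicit two points the paper leaves implicit: the componentwise description of $\ast_{\mathrm{free}}^{-1}$, and injectivity via restriction to each $Q_{\lambda}$.
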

\begin{proof}
Let $(\rho_{\lambda})_{\lambda} \in \prod_{\lambda \in \Lambda}\good(Q_{\lambda},\ast_{\lambda})$.
Since the quandles $\{Q_{\lambda}\}_{\lambda \in \Lambda}$ are pairwise disjoint, each $x \in \bigsqcup_{\lambda \in \Lambda}Q_{\lambda}$ belongs to $Q_{\lambda}$ for a unique $\lambda \in \Lambda$.
Thus, we can define a well-defined map $\rho \colon \bigsqcup_{\lambda \in \Lambda}Q_{\lambda} \to \bigsqcup_{\lambda \in \Lambda}Q_{\lambda}$ by 
\begin{equation*}
    \rho(x) \coloneqq \rho_{\lambda}(x).
\end{equation*}
We show that $\rho \in \good(\bigsqcup^{\mathrm{free}}_{\lambda \in \Lambda} Q_{\lambda})$.

First, we show that $\rho$ is an involution.
For any $x \in \bigsqcup_{\lambda \in \Lambda}Q_{\lambda}$, there exists $\lambda \in \Lambda$ such that $x \in Q_{\lambda}$.
For such $\lambda$, we have 
\begin{equation*}
    \rho^2(x) = \rho_{\lambda}^{2}(x) = \id_{Q_{\lambda}}(x) = x,
\end{equation*}
i.e. $\rho$ is an involution.

Next, we show that $\rho(x\ast_{\mathrm{free}} y) = \rho(x) \ast_{\mathrm{free}} y$ for all $x,y \in \bigsqcup_{\lambda \in \Lambda}Q_{\lambda}$.
If $x,y \in Q_{\lambda}$, then we have 
\begin{align*}
    \rho(x\ast_{\mathrm{free}} y) 
    &= 
    \rho (x\ast_{\lambda} y) \\
    &=
    \rho_{\lambda} (x\ast_{\lambda} y) \\
    &=
    \rho_{\lambda} (x) \ast_{\lambda} y \\
    &=
    \rho(x) \ast_{\mathrm{free}} y.
\end{align*}
If $x \in Q_{\lambda}$ and $y \in Q_{\mu}$ for $\lambda \neq \mu$, then we have 
\begin{align*}
    \rho(x\ast_{\mathrm{free}} y) 
    &= 
    \rho(x)\\
    &=
    \rho_{\lambda}(x) \\
    &=
    \rho_{\lambda}(x) \ast_{\mathrm{free}} y \\
    &=
    \rho(x) \ast_{\mathrm{free}} y.
\end{align*}
Thus, $\rho(x\ast_{\mathrm{free}} y) = \rho(x) \ast_{\mathrm{free}} y$ holds for all $x,y \in \bigsqcup_{\lambda \in \Lambda}Q_{\lambda}$.

Finally, we show that $x \ast_{\mathrm{free}} \rho(y) = x \ast_{\mathrm{free}}^{-1} y$ for all $x,y \in \bigsqcup_{\lambda \in \Lambda}Q_{\lambda}$.
If $x,y \in Q_{\lambda}$, then we have
\begin{align*}
    x \ast_{\mathrm{free}} \rho(y)
    &=
    x \ast_{\mathrm{free}} \rho_{\lambda}(y) \\
    &=
    x \ast_{\lambda} \rho_{\lambda}(y) \\
    &=
    x \ast_{\lambda}^{-1} y \\
    &=
    x \ast_{\mathrm{free}}^{-1} y.
\end{align*}
If $x \in Q_{\lambda}$ and $y \in Q_{\mu}$ for $\lambda \neq \mu$, then we have 
\begin{align*}
    x \ast_{\mathrm{free}} \rho(y)
    &=
    x \ast_{\mathrm{free}} \rho_{\mu}(y) \\
    &=
    x \\
    &=
    x \ast_{\mathrm{free}}^{-1} y.
\end{align*}
Thus, $x \ast_{\mathrm{free}} \rho(y) = x \ast_{\mathrm{free}}^{-1} y$ holds for all $x,y \in \bigsqcup_{\lambda \in \Lambda}Q_{\lambda}$.

Therefore, the map $\rho$ is a good involution on $\bigsqcup^{\mathrm{free}}_{\lambda \in \Lambda} Q_{\lambda}$.
\end{proof}

The following example shows that the equality does not hold in general.

\begin{example} \label{eg: family of trivial qdle}
Let $A, B$ be nonempty sets and $A_{\mathrm{tri}}, B_{\mathrm{tri}}$ denote the trivial quandles on $A,B$, respectively.

Since the interaction-free union of these quandles satisfies 
\begin{equation*}
    A_{\mathrm{tri}} \bigsqcup^{\mathrm{free}} B_{\mathrm{tri}} \cong (A\sqcup B)_{\mathrm{tri}}
\end{equation*}
we have 
\begin{equation*}
    \good(A_{\mathrm{tri}} \bigsqcup^{\mathrm{free}} B_{\mathrm{tri}})
    =
    \good((A\sqcup B)_{\mathrm{tri}}) 
    =
    \mathrm{Inv}(A \sqcup B).
\end{equation*}
On the other hand, it is well-known that $\mathrm{Inv}(A) \times \mathrm{Inv}(B) \subsetneq \mathrm{Inv}(A \sqcup B)$ for nonempty sets. 
Since $\good(A_{\mathrm{tri}}) = \mathrm{Inv}(A)$ and $\good(B_{\mathrm{tri}}) = \mathrm{Inv}(B)$, we obtain 
\begin{equation*}
    \good(A_{\mathrm{tri}}) \times \good(B_{\mathrm{tri}}) 
    \subsetneq
    \good(A_{\mathrm{tri}} \bigsqcup^{\mathrm{free}} B_{\mathrm{tri}}).
\end{equation*}
\end{example}

Next, we provide a sufficient condition for the equality to hold.

\begin{lemma}\label{lem: qdle operation is not identity implies decomposition}
Let $\{(Q_{\lambda},\ast_{\lambda})\}_{\lambda \in \Lambda}$ be a family of quandles and $\bigsqcup^{\mathrm{free}}_{\lambda \in \Lambda} Q_{\lambda}$ denote their interaction-free union.

If $s_{x}|_{Q_{\lambda}} \neq \id_{Q_{\lambda}}$ holds for any $\lambda \in \Lambda$ and any $x \in Q_{\lambda}$, then there is a canonical bijection 
\begin{equation*}
    \prod_{\lambda \in \Lambda} \good(Q_{\lambda},\ast_{\lambda}) \cong \good(\bigsqcup^{\mathrm{free}}_{\lambda \in \Lambda} Q_{\lambda}).
\end{equation*}
\end{lemma}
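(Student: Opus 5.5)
Proposition~\ref{prop: inclusion of decomposition interaction-free union} already gives an injective map $\Psi\colon \prod_{\lambda}\good(Q_{\lambda},\ast_{\lambda}) \to \good(\bigsqcup^{\mathrm{free}}_{\lambda} Q_{\lambda})$, defined by gluing the $\rho_{\lambda}$. It therefore remains to prove surjectivity. The plan is to show that every good involution $\rho$ on the interaction-free union preserves each component, $\rho(Q_{\lambda}) = Q_{\lambda}$. Then each restriction $\rho|_{Q_{\lambda}}$ is an involution of $Q_{\lambda}$. Both good-involution axioms restrict directly, because on $Q_{\lambda}$ the operation $\ast_{\mathrm{free}}$ coincides with $\ast_{\lambda}$ and $\ast_{\mathrm{free}}^{-1}$ with $\ast_{\lambda}^{-1}$. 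Hence $(\rho|_{Q_{\lambda}})_{\lambda}$ is a preimage of $\rho$ under $\Psi$.

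The key step is the invariance $\rho(Q_{\lambda}) \subseteq Q_{\lambda}$. Fix $y \in Q_{\lambda}$ and suppose $\rho(y) \in Q_{\mu}$ with $\mu \neq \lambda$. I would use the second axiom, $x \ast_{\mathrm{free}} \rho(y) = x \ast_{\mathrm{free}}^{-1} y$, restricted to $x \in Q_{\lambda}$.
\begin{itemize}
\item The left side equals $x$, since $x$ and $\rho(y)$ lie in different components.
\item The right side equals $x \ast_{\lambda}^{-1} y = s_{y}|_{Q_{\lambda}}^{-1}(x)$.
\end{itemize}
Thus $s_{y}|_{Q_{\lambda}}^{-1} = \id_{Q_{\lambda}}$, i.e.\ $s_{y}|_{Q_{\lambda}} = \id_{Q_{\lambda}}$, contradicting the hypothesis. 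Hence $\rho(Q_{\lambda}) \subseteq Q_{\lambda}$ for every $\lambda$. Applying $\rho$ once more gives $Q_{\lambda} = \rho^{2}(Q_{\lambda}) \subseteq \rho(Q_{\lambda})$, so equality holds, exactly as in the proof of Theorem~\ref{main thm: decomp of maximal trivial component}.

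Finally, I would check that the two constructions are mutually inverse. Gluing the restrictions recovers $\rho$, and restricting a glued map recovers its pieces. This makes the bijection canonical.

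I do not expect a real obstacle. The only point needing care is the invariance step above, in particular remembering that $s_{y}$ restricted to $Q_{\lambda}$ equals the right multiplication of $Q_{\lambda}$ itself. That identification is exactly what the hypothesis $s_{x}|_{Q_{\lambda}} \neq \id_{Q_{\lambda}}$ is designed to control.
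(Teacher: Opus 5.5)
Your proposal is correct and follows essentially the same route as the paper. The paper also uses the second axiom to force $\rho(Q_{\lambda}) \subseteq Q_{\lambda}$, writing it as the map identity $s_{\rho(y)} = s_{y}^{-1}$ restricted to $Q_{\lambda}$ where you evaluate at points $x \in Q_{\lambda}$; it then upgrades to equality via $\rho^{2} = \id$ and takes restriction as the inverse of the gluing map from Proposition~\ref{prop: inclusion of decomposition interaction-free union}.
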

\begin{proof}
Let $\rho \in \good(\bigsqcup^{\mathrm{free}}_{\lambda \in \Lambda} Q_{\lambda})$ be a good involution.
By the definition of good involutions,
for any $x \in \bigsqcup^{\mathrm{free}}_{\lambda \in \Lambda} Q_{\lambda}$ and any $y \in Q_{\lambda}$, we have $x \ast_{\mathrm{free}} \rho(y) = x \ast_{\mathrm{free}}^{-1} y$, i.e.  $s_{\rho(y)} = s_{y}^{-1}$ as maps.
By the assumption that $s_{x}|_{Q_{\lambda}} \neq \id_{Q_{\lambda}}$, it follows that $s_{y}^{-1}|_{Q_{\lambda}} \neq \id_{Q_{\lambda}}$.

Assume on the contrary that $\rho(y) \notin Q_{\lambda}$ for some $y \in Q_{\lambda}$. Then by the definition of the interaction-free union, it follows that $s_{\rho(y)}|_{Q_{\lambda}} = \id_{Q_{\lambda}}$, which contradicts the assumption that $s_{y}^{-1}|_{Q_{\lambda}} \neq \id_{Q_{\lambda}}$. 
Hence, we obtain $\rho(y) \in Q_{\lambda}$, which implies $\rho(Q_{\lambda}) \subseteq Q_{\lambda}$.

Since $\rho$ is an involution, by applying $\rho$ to both sides of $\rho(Q_{\lambda}) \subseteq Q_{\lambda}$, we have $Q_{\lambda} = \rho^{2}(Q_{\lambda}) \subseteq \rho(Q_{\lambda})$. 
Thus, we obtain $\rho(Q_{\lambda}) = Q_{\lambda}$ for all $\lambda \in \Lambda$.

Therefore, the restriction satisfies $\rho|_{Q_{\lambda}} \in \good(Q_{\lambda})$ for each $\lambda \in \Lambda$, 
which induces the inclusion:
\begin{equation*}
    \good(\bigsqcup^{\mathrm{free}}_{\lambda \in \Lambda} Q_{\lambda}) 
    \hookrightarrow
    \prod_{\lambda \in \Lambda} \good(Q_{\lambda},\ast_{\lambda}).
\end{equation*}
Since this inclusion is the inverse map of the one constructed in Proposition~\ref{prop: inclusion of decomposition interaction-free union}, we obtain the desired bijection.
\end{proof}
%%%

\begin{thm} \label{main thm: decomp of good inv on interaction-free}
Let $\{(Q_{\lambda},\ast_{\lambda})\}_{\lambda \in \Lambda}$ be a family of nontrivial connected quandles and $\bigsqcup^{\mathrm{free}}_{\lambda \in \Lambda} Q_{\lambda}$ denote their interaction-free union.
Then there is a canonical bijection 
\begin{equation*}
    \good(\bigsqcup^{\mathrm{free}}_{\lambda \in \Lambda} Q_{\lambda})
    \cong
    \prod_{\lambda \in \Lambda} \good(Q_{\lambda},\ast_{\lambda}).
\end{equation*}
\end{thm}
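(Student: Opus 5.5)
The plan is to reduce the statement to Lemma~\ref{lem: qdle operation is not identity implies decomposition}. That lemma already gives the canonical bijection (inverse to the inclusion of Proposition~\ref{prop: inclusion of decomposition interaction-free union}) once we know that $s_{x}|_{Q_{\lambda}} \neq \id_{Q_{\lambda}}$ for every $\lambda \in \Lambda$ and every $x \in Q_{\lambda}$. So the only thing to prove is this hypothesis: in a nontrivial connected quandle $(Q_{\lambda},\ast_{\lambda})$, no right multiplication $s_{x}$ is the identity.

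I will argue by contradiction. Suppose $s_{x} = \id_{Q_{\lambda}}$ for some $x \in Q_{\lambda}$. The key observation is that the set of elements with trivial right multiplication is invariant under automorphisms. For $f \in \operatorname{Aut}(Q_{\lambda})$ we have $s_{f(x)} = f \circ s_{x} \circ f^{-1}$, because
\begin{equation*}
f(z) \ast_{\lambda} f(x) = f(z \ast_{\lambda} x).
\end{equation*}
Hence $s_{f(x)} = \id$ as well. In particular this holds for every $f \in \operatorname{Inn}(Q_{\lambda})$, since the generators $s_{y}$ are automorphisms by the self-distributivity axiom, and so are their inverses.

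Connectedness says that $\operatorname{Inn}(Q_{\lambda})$ acts transitively. Therefore every $y \in Q_{\lambda}$ has the form $f(x)$ for some inner automorphism $f$, and so $s_{y} = \id$ for all $y$. That means $z \ast_{\lambda} y = z$ for all $z, y \in Q_{\lambda}$, i.e. $Q_{\lambda}$ is trivial, contradicting nontriviality.

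There is one small point to check about the meaning of ``nontrivial''. It must mean ``not a trivial quandle''. This correctly excludes the one-point quandle, which is trivial; and indeed for a singleton the hypothesis of the lemma fails while the conclusion of the theorem still holds. With that reading, the hypothesis of Lemma~\ref{lem: qdle operation is not identity implies decomposition} is satisfied, and the bijection follows.

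I do not expect a real obstacle. The only step needing care is the conjugation identity $s_{f(x)} = f s_{x} f^{-1}$ under the paper's composition convention for $\operatorname{Aut}$. The argument only uses that the condition ``$s_{x}$ is the identity'' is preserved, so the convention does not matter.
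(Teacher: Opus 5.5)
Your proof is correct and essentially the paper's. The paper also reduces to Lemma~\ref{lem: qdle operation is not identity implies decomposition} and argues by contradiction: it derives $s_{a\ast b}=s_{b}\circ s_{a}\circ s_{b}^{-1}$ from self-distributivity and iterates it along a word $s_{b_{n}}^{\varepsilon_{n}}\circ\cdots\circ s_{b_{1}}^{\varepsilon_{1}}$ given by connectedness, which is your identity $s_{f(x)}=f\circ s_{x}\circ f^{-1}$ applied one generator $f=s_{b}$ at a time.

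One caveat on your aside about singletons: allowing singleton components, the conclusion holds only when at most one component is a singleton. Two singleton components give a two-element trivial quandle with two good involutions, while the product side has one element, so the nontriviality hypothesis is genuinely needed and is not just a technical artifact.
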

\begin{proof}
We show that this interaction-free union $\bigsqcup^{\mathrm{free}}_{\lambda \in \Lambda} Q_{\lambda}$ satisfies the assumption of Lemma~\ref{lem: qdle operation is not identity implies decomposition}.

Assume on the contrary that there exists $\lambda \in \Lambda$ and $a \in Q_{\lambda}$ such that $s_{a}|_{Q_{\lambda}} = \id_{Q_{\lambda}}$.
Then by the definition of quandles, it follows that 
\begin{equation*}
    (x\ast_{\mathrm{free}}a)\ast_{\mathrm{free}} b 
    =
    (x \ast_{\mathrm{free}} b) \ast_{\mathrm{free}} (a \ast_{\mathrm{free}} b) 
\end{equation*}
for all $x,a,b \in Q_{\lambda}$. 
Since $s_{b} \circ s_{a}(x) = s_{a\ast_{\mathrm{free}} b} \circ s_{b} (x)$ holds for all $x \in Q_{\lambda}$,
we obtain the following identity of maps on $Q_{\lambda}$:
\begin{equation*}
    s_{b} \circ s_{a} = s_{a\ast_{\mathrm{free}} b} \circ s_{b},
\end{equation*}
from which it follows that 
\begin{equation*}
    s_{a\ast_{\mathrm{free}} b}
    =
    s_{b} \circ s_{a}\circ s_{b}^{-1}
    =
    s_{b} \circ \id_{Q_{\lambda}} \circ s_{b}^{-1}
    =
    \id_{Q_{\lambda}},
\end{equation*}
i.e. $s_{a\ast_{\mathrm{free}} b}|_{Q_{\lambda}} = \id_{Q_{\lambda}}$.

By the connectedness of $Q_{\lambda}$, it follows that for any $c \in Q_{\lambda}$ there exist $b_{1},\ldots,b_{n} \in Q_{\lambda}$ such that 
\begin{equation*}
    c = s_{b_{n}}^{\varepsilon_{n}} \circ \cdots \circ s_{b_{1}}^{\varepsilon_{1}}(a),
\end{equation*}
where $\varepsilon_{1},\ldots,\varepsilon_{n} \in \{\pm1\}$.
From the above argument, we have 
\begin{equation*}
    s_{c} 
    = 
    s_{s_{b_{n}}^{\varepsilon_{n}} \circ \cdots \circ s_{b_{1}}^{\varepsilon_{1}}(a)}
    =
    s_{b_{1}}^{\varepsilon_{1}}\circ \cdots \circ s_{b_{n}}^{\varepsilon_{n}}\circ s_{a} \circ s_{b_{n}}^{-\varepsilon_{n}} \circ \cdots \circ s_{b_{1}}^{-\varepsilon_{1}}.
\end{equation*}
Thus, if $s_{a}|_{Q_{\lambda}} = \id_{Q_{\lambda}}$, then
\begin{equation*}
    s_{c}|_{Q_{\lambda}} = s_{s_{b_{n}}^{\varepsilon_{n}} \circ \cdots \circ s_{b_{1}}^{\varepsilon_{1}}(a)}|_{Q_{\lambda}} = \id_{Q_{\lambda}}.
\end{equation*}
This means that $Q_{\lambda}$ is a trivial quandle, which contradicts our assumption that $Q_{\lambda}$ is a nontrivial connected quandle for all $\lambda \in \Lambda$.

Therefore, by Lemma~\ref{lem: qdle operation is not identity implies decomposition}, we obtain the desired bijection.
\end{proof}

%%%%%%%%%%%%%%%%%%%%%%%%%%%%%%%%%%%%%%%
\section{Generalized symplectic quandles} \label{sec: symplectic quandle}

%%%%%%%%%%%%%%%%%%%%%%%%%%%%%%%%%%%%%%%
\subsection{$R$-modules with an antisymmetric bilinear form} \label{subsec: module with antisymmetric bilinear form}

We provide some facts on $R$-modules with an antisymmetric bilinear form.

\begin{defin}
    Let $R$ be a commutative ring with identity and $M$ an $R$-module with an antisymmetric bilinear form $\langle, \rangle \colon M \times M \to R$ such that $\langle x,x \rangle = 0$ for all $x \in M$.
    Let $M^{\vee}$ denote the dual of $M$.
    \begin{itemize}
        \item An antisymmetric bilinear form $\langle, \rangle$ on $M$ is said to be \emph{nondegenerate} if the map $M \to M^{\vee}$ defined by $x \mapsto \langle x, -\rangle$ is injective.
        \item An antisymmetric bilinear form $\langle, \rangle$ on $M$ is said to be \emph{unimodular} if the map $M \to M^{\vee}$ defined by $x \mapsto \langle x, -\rangle$ is bijective.
    \end{itemize} 
\end{defin}

Note that if $R$ is not a field, then the nondegeneracy of antisymmetric bilinear forms does not necessarily imply unimodularity.

\begin{defin}
In the same setting as above,
the set $\rad(M)$ defined by 
\begin{equation*}
    \rad (M) = \{x \in M \mid \langle x,y \rangle = 0 \text{ for any } y \in M\}
\end{equation*}
is called the \emph{radical} of $M$.
\end{defin}

It is easy to see that the radical $\rad (M)$ of $M$ is a submodule of $M$.
The antisymmetric bilinear form on $M$ is nondegenerate if and only if $\rad (M) = \{0\}$.

\begin{defin}
For an $R$-module $M$, we define the \emph{torsion submodule} $T(M)$ of $M$ by 
\begin{equation*}
    T(M) = \{m \in M \mid rm = 0 \text{ for some } 0 \neq r \in R\}.
\end{equation*}
\end{defin}

The following facts are well-known.

\begin{prop} \label{prop: torsion module and radical}
    If $R$ is an integral domain, then the following hold:
    \begin{enumerate}
        \item The torsion submodule $T(M)$ of $M$ is a submodule.
        \item $T(M)$ is a subset of the radical $\rad(M)$.
    \end{enumerate}    
\end{prop}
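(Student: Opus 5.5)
For part (1), I will verify the submodule axioms directly, using that $R$ is an integral domain. Clearly $0 \in T(M)$, since $1\cdot 0 = 0$ with $1 \neq 0$.

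Next, take $m, m' \in T(M)$ with $rm = 0$ and $r'm' = 0$ for nonzero $r, r' \in R$. Then $rr' \neq 0$ because $R$ has no zero divisors, and
\begin{equation*}
rr'(m+m') = r'(rm) + r(r'm') = 0.
\end{equation*}
Hence $m+m' \in T(M)$. For any $s \in R$ we also have $r(sm) = s(rm) = 0$, so $sm \in T(M)$. Commutativity of $R$ is what allows the scalars to be rearranged in these two computations.

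For part (2), let $m \in T(M)$ with $rm = 0$ for some $r \neq 0$, and let $y \in M$ be arbitrary. By bilinearity,
\begin{equation*}
r\langle m, y\rangle = \langle rm, y\rangle = \langle 0, y\rangle = 0.
\end{equation*}
Since $r \neq 0$ and $R$ is an integral domain, this forces $\langle m, y\rangle = 0$. As $y$ was arbitrary, $m \in \rad(M)$.

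There is no real obstacle in either part. The only point requiring care is that the integral domain hypothesis is used twice: once to guarantee that the product $rr'$ of the two annihilators is nonzero, and once to cancel $r$ in part (2).
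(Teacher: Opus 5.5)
Your proof is correct and complete. The paper gives no proof of this proposition and simply calls these facts well-known; your argument (using $rr'\neq 0$ for closure under addition, and cancelling $r$ in $r\langle m,y\rangle=\langle rm,y\rangle=0$) is the standard verification that fills this in.
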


%%%%%%%%%%%%%%%%%%%%%%%%%%%%%%%%%%%%%%%
\subsection{Generalized symplectic quandles} \label{subsec: symplectic quandles}

We introduce the notion of \emph{generalized symplectic quandles}, which generalizes the notion of symplectic quandles introduced by Navas and Nelson \cite{MR2493966}.
Symplectic quandles were primarily studied over free $R$-modules of finite rank. By introducing the notion of generalized symplectic quandles, we can also handle $R$-modules that are not necessarily free or finite rank.

\begin{defin}\label{def: generalized symplectic quandles}
    Let $R$ be a commutative ring with identity and $M$ be an $R$-module with an antisymmetric bilinear form $\langle, \rangle\colon M \times M \to R$ such that $\langle x,x \rangle = 0$ for all $x \in M$. 
    We define a binary operation $\ast \colon M\times M \to M$ by 
    \begin{equation*}
        x \ast y \coloneqq x + \langle x,y \rangle y.
    \end{equation*}
    The pair $(M,\ast)$ is called a \emph{generalized symplectic quandle over $R$}.
\end{defin}

\begin{remark}
    The dual quandle $(M,\ast^{-1})$ of a generalized symplectic quandle $(M,\ast)$ is defined by 
    \begin{equation*}
        x \ast^{-1} y = x - \langle x,y \rangle y.
    \end{equation*} 
\end{remark}

It is easy to see that if $M$ is a free $R$-module of finite rank, then a generalized symplectic quandle is nothing but a symplectic quandle.
One of the main advantages of generalized symplectic quandles is that they expand the framework to encompass broader classes of modules, such as torsion modules or quotient modules.

\begin{example}
    If an antisymmetric bilinear form $\langle , \rangle \colon M \times M \to R$ is a zero map, i.e. $\langle x,y \rangle = 0$ for every $x,y \in M$, then the generalized symplectic quandle $(M,\ast)$ is a trivial quandle on $M$.
\end{example}

\begin{defin}
    We say that a generalized symplectic quandle is \emph{nontrivial} if it is not a trivial quandle.
\end{defin}

Many properties that hold for symplectic quandles also hold for generalized symplectic quandles.
Here, we list some of these properties, noting that some require appropriate modifications of their proofs.

\begin{prop}[{\cite[Proposition 3.4]{NY2604}}] \label{prop: right-action is linear}
Let $(M,\ast)$ be a generalized symplectic quandle over a commutative ring $R$ with identity. 
For each $y \in M$, we define a map $s_{y}\colon M \to M$ by $s_{y}(x) \coloneqq x \ast y$.
Then $s_{y}$ is an $R$-linear invertible map.
\end{prop}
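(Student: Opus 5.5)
The plan is to verify the two claims directly from the formula $s_{y}(x) = x + \langle x,y\rangle y$: first $R$-linearity, then invertibility by exhibiting an explicit inverse.

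\emph{Linearity.} For $x_{1},x_{2}\in M$ and $r\in R$, bilinearity of the form in its first argument gives
\begin{equation*}
s_{y}(rx_{1}+x_{2}) = rx_{1}+x_{2} + \bigl(r\langle x_{1},y\rangle + \langle x_{2},y\rangle\bigr)y = r\,s_{y}(x_{1}) + s_{y}(x_{2}).
\end{equation*}
Only linearity in the first slot is used here, so this step is routine.

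\emph{Invertibility.} The candidate inverse is the dual operation recorded in the preceding remark, $t_{y}(x) \coloneqq x - \langle x,y\rangle y$; by the same computation $t_{y}$ is $R$-linear. We then check both composites:
\begin{equation*}
t_{y}(s_{y}(x)) = x + \langle x,y\rangle y - \bigl\langle x+\langle x,y\rangle y,\, y\bigr\rangle y = x + \langle x,y\rangle y - \langle x,y\rangle y - \langle x,y\rangle\langle y,y\rangle y = x,
\end{equation*}
using bilinearity and the standing hypothesis $\langle y,y\rangle = 0$. The computation $s_{y}\circ t_{y} = \id_{M}$ is symmetric, with the signs swapped.

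The only point needing care is that the cancellation relies on the alternating condition $\langle y,y\rangle=0$, assumed in Definition~\ref{def: generalized symplectic quandles}, rather than on antisymmetry alone; antisymmetry would not suffice in characteristic $2$. Nothing about freeness or finite rank of $M$ is used, which is why the argument of \cite[Proposition 3.4]{NY2604} carries over verbatim to the generalized setting.
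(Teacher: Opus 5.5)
Your proof is correct and follows the standard argument. The paper gives no proof of its own and cites \cite[Proposition 3.4]{NY2604}; that argument is the same as yours: linearity from linearity of $\langle\, , \rangle$ in the first slot, and the explicit inverse $x \mapsto x - \langle x,y\rangle y$ (the dual operation), where the cancellation uses only $\langle y,y\rangle = 0$ and never freeness or finite rank of $M$.
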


We have seen that if the characteristic of $R$ is 2, then a generalized symplectic quandle over $R$ is a kei (cf.~\cite[Proposition 1]{MR2493966}).
We also show that the converse is true when $R$ is an integral domain.

\begin{prop}[{cf.~\cite[Proposition 3.11]{NY2604}}]
Suppose that $R$ is an integral domain.
If the characteristic of $R$ is not 2 and a generalized symplectic quandle $(M,\ast)$ over $R$ is a kei, then $(M,\ast)$ is a trivial quandle.
\end{prop}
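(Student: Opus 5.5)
We have to prove: if $R$ is an integral domain with $\operatorname{char} R \neq 2$ and the generalized symplectic quandle $(M,\ast)$ is a kei, then $\langle x,y\rangle = 0$ for all $x,y \in M$. By the example above, this is exactly the statement that $(M,\ast)$ is trivial.

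The strategy is to compare the operation with its dual, using the formulas $x \ast y = x + \langle x,y\rangle y$ and $x \ast^{-1} y = x - \langle x,y\rangle y$ from the remark after Definition~\ref{def: generalized symplectic quandles}.

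The key step is to unwind the kei condition. By definition, $(M,\ast)$ being a kei means $\ast = \ast^{-1}$, that is, $x \ast y = x \ast^{-1} y$ for all $x,y$. Substituting the two formulas and cancelling $x$ gives
\begin{equation*}
  2\langle x,y\rangle\, y = 0 \qquad \text{for all } x,y \in M .
\end{equation*}
Put $r \coloneqq 2\langle x,y\rangle \in R$. If $r \neq 0$, this equation says $y \in T(M)$. Then Proposition~\ref{prop: torsion module and radical} gives $y \in \rad(M)$, so $\langle x,y\rangle = 0$ and hence $r = 0$, a contradiction. Therefore $r = 0$ in all cases.

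Next we remove the factor $2$. Since $\operatorname{char} R \neq 2$, we have $2 \neq 0$ in $R$. As $R$ is an integral domain, $2\langle x,y\rangle = 0$ forces $\langle x,y\rangle = 0$. Thus the form vanishes identically, $x \ast y = x$ for all $x,y$, and $(M,\ast)$ is trivial.

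The only delicate point is that $M$ may have torsion, so we cannot cancel $y$ directly from $2\langle x,y\rangle y = 0$. This is exactly where the inclusion $T(M) \subseteq \rad(M)$ from Proposition~\ref{prop: torsion module and radical} is needed. If one prefers not to rely on it, the inclusion has a one-line proof: if $sy = 0$ with $s \neq 0$, then $s\langle y,z\rangle = \langle sy,z\rangle = 0$, and since $R$ is a domain, $\langle y,z\rangle = 0$ for every $z$.
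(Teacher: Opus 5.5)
Your proof is correct and follows essentially the same route as the paper: you reduce the kei condition to $2\langle x,y\rangle y = 0$ and use $T(M)\subseteq\rad(M)$ to handle torsion elements. Your ordering, which first shows $2\langle x,y\rangle = 0$ in $R$ and only then cancels $2$ in the domain $R$, is slightly more careful than the paper's. The paper cancels $2$ directly on the module element $\langle x,y\rangle y$, a step that $\operatorname{char} R \neq 2$ alone does not justify when $M$ has $2$-torsion, although the paper's subsequent case split on $y \in T(M)$ still yields the right conclusion.
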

\begin{proof}
Since $(M,\ast)$ over $R$ is a kei, we have $x \ast y = x \ast^{-1} y$ for all $x,y \in M$, which implies $2\langle x,y \rangle y = 0$.
Since the characteristic of $R$ is not 2, we obtain $\langle x,y \rangle y = 0$.

Fix $y \in M$.
If $y \in T(M)$, then the inclusion $T(M) \subseteq \rad(M)$ implies $\langle x,y \rangle = 0$ for all $x \in M$.
If $y \notin T(M)$, then $\langle x,y \rangle y = 0$ implies $\langle x,y \rangle = 0$ because $M\setminus T(M)$ is torsion-free.

Thus, $\langle x,y \rangle = 0$ holds for all $x,y \in M$, i.e. $(M,\ast)$ is a trivial quandle.
\end{proof}

\begin{cor}[{cf.~\cite[Proposition 3.13]{NY2604}}]
Suppose that $R$ is an integral domain.
A nontrivial generalized symplectic quandle over $R$ is a kei if and only if the characteristic of $R$ is 2.
\end{cor}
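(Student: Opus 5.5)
The statement is an ``if and only if'', so I will prove the two directions separately, using the results just established.

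For the ``if'' direction, suppose the characteristic of $R$ is $2$. Then $-1 = 1$ in $R$, so for all $x,y \in M$
\begin{equation*}
x \ast^{-1} y = x - \langle x,y\rangle y = x + \langle x,y\rangle y = x \ast y .
\end{equation*}
Hence $\ast = \ast^{-1}$ and $(M,\ast)$ is a kei. This direction does not use nontriviality or the integral domain hypothesis. It is the observation cited from \cite[Proposition 1]{MR2493966}, but I would rederive it in one line from the formula for the dual operation in the preceding remark.

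For the ``only if'' direction, suppose $(M,\ast)$ is a nontrivial generalized symplectic quandle over the integral domain $R$ and that it is a kei. Assume for contradiction that the characteristic of $R$ is not $2$. The preceding proposition then says that $(M,\ast)$ is a trivial quandle, which contradicts nontriviality. So the characteristic of $R$ must be $2$.

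No step here is a real obstacle; the corollary is just the contrapositive of the previous proposition combined with the characteristic-$2$ computation. The only point needing care is that the integral domain hypothesis enters only through the preceding proposition. It is needed there so that $2\langle x,y\rangle y = 0$ forces $\langle x,y\rangle = 0$, using $T(M) \subseteq \rad(M)$ from Proposition~\ref{prop: torsion module and radical}. Otherwise the two directions combine immediately.
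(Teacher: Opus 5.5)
Your proposal is correct and follows essentially the same route as the paper, which gives no separate proof: the corollary is obtained by combining the characteristic-$2$ observation ($-1=1$ gives $x\ast^{-1}y = x\ast y$) with the preceding proposition that a kei over an integral domain of characteristic not $2$ is trivial. Your account of where the integral domain hypothesis enters is accurate. It is used, via $T(M)\subseteq \rad(M)$, to pass from $2\langle x,y\rangle y=0$ to $\langle x,y\rangle=0$.
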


\subsection{Decomposition of a generalized symplectic quandle as an interaction-free union} \label{subsec: symplectic quandles as interaction-free union}

Let $R$ be a commutative ring with identity and $M$ an $R$-module with an antisymmetric bilinear form $\langle, \rangle\colon M \times M \to R$.
In this section, we show that a generalized symplectic quandle $(M,\ast)$ decomposes as an interaction-free union of quandles.

The radical $\rad(M)$ of $M$ is a submodule of $M$, which forms a subquandle of $(M,\ast)$.
This subquandle $(\rad(M),\ast)$ is a trivial quandle (cf.~\cite[Proposition 2]{MR2493966}).

On the other hand, although the complement $M \setminus \rad (M)$ is not a submodule of $M$ in general unless $R$ is a field, it inherits a subquandle structure from $(M,\ast)$.

\begin{lemma}
Let $M$ be a $R$-module with an antisymmetric bilinear form $\langle, \rangle\colon M \times M \to R$.
If $M \neq \rad(M)$, then $M \setminus \rad (M)$ is a subquandle of $(M,\ast)$.
\end{lemma}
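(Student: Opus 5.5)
We must show: if $x,y\in M\setminus\rad(M)$ then $x\ast y$ and $x\ast^{-1}y$ lie in $M\setminus\rad(M)$. The quandle axioms then hold automatically on the subset, because it is closed under $\ast$ and $\ast^{-1}$ and the identities are inherited from $(M,\ast)$. Nonemptiness is the hypothesis $M\neq\rad(M)$.

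The plan is to exploit the linearity of right translations (Proposition~\ref{prop: right-action is linear}) together with the invariance of the form. First, for any $z\in M$,
\begin{equation*}
\langle x\ast y, z\ast y\rangle = \langle x + \langle x,y\rangle y,\ z+\langle z,y\rangle y\rangle = \langle x,z\rangle + \langle z,y\rangle\langle x,y\rangle + \langle x,y\rangle\langle y,z\rangle + \langle x,y\rangle\langle z,y\rangle\langle y,y\rangle .
\end{equation*}
The last term vanishes since $\langle y,y\rangle=0$, and the middle two cancel by antisymmetry. So $\langle s_y(x),s_y(z)\rangle=\langle x,z\rangle$, i.e.\ $s_y$ preserves the form.

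Second, since $s_y$ is a bijection of $M$, suppose $x\ast y\in\rad(M)$. Then for every $z\in M$ we get $\langle x,z\rangle=\langle s_y(x),s_y(z)\rangle=0$, so $x\in\rad(M)$, a contradiction. The same argument applies to $s_y^{-1}$, which also preserves the form, giving $x\ast^{-1}y\notin\rad(M)$.

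In fact this shows more: $s_y$ preserves both $\rad(M)$ and its complement for every $y\in M$, not only for $y\notin\rad(M)$. The only slightly delicate point is the invariance computation, where the hypothesis $\langle y,y\rangle=0$ is needed. No integral-domain assumption is required. Alternatively, one can note that $\rad(M)$ equals the maximal trivial component $D$ and invoke the first part of the proof of Lemma~\ref{lem: interaction-free of maximal trivial component}, but the direct argument above avoids needing that identification.
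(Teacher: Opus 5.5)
Your proof is correct, but it takes a different route from the paper's.

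\textbf{The paper's route.} It pairs $x\ast y$ only against the single vector $y$. If $x\ast y\in\rad(M)$, then
\[
0=\langle x\ast y,y\rangle=\langle x,y\rangle+\langle x,y\rangle\langle y,y\rangle=\langle x,y\rangle ,
\]
so $x\ast y=x$, which would put $x$ itself in $\rad(M)$. The case of $\ast^{-1}$ is dismissed as ``similar''.

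\textbf{Your route.} You prove that every $s_y$ is an isometry of $\langle,\rangle$. The paper's identity $\langle x\ast y,y\rangle=\langle x,y\rangle$ is the special case $z=y$ of your invariance formula, since $s_y(y)=y$.

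\textbf{What each buys.} The paper's argument is more economical: one test vector suffices, and the conclusion $x\ast y=x$ comes for free. Yours gives a more structural fact: $\operatorname{Inn}(M,\ast)$ acts on $M$ by isometries, so every $s_y$ preserves both $\rad(M)$ and its complement. It also handles $\ast^{-1}$ without a separate computation, because the inverse of a bijective isometry is an isometry. Your direct formulation also avoids the paper's awkwardly quantified proof by contradiction.

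\textbf{Minor remarks.} Neither argument uses $y\notin\rad(M)$, and neither needs $R$ to be an integral domain, so you are right that the identification $\rad(M)=D$ can be avoided. In your $\ast$ step, bijectivity of $s_y$ is not actually needed, since $\langle s_y(x),w\rangle=0$ for all $w$ already covers $w=s_y(z)$. Bijectivity only matters for transferring the isometry property to $s_y^{-1}$.
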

\begin{proof}
We first show that for every $x,y \in M \setminus \rad(M)$, $x \ast y \in M \setminus \rad(M)$ holds.

Assume on the contrary that for every $x \in M \setminus \rad(M)$, there exists $y \in M \setminus \rad(M)$ such that $x \ast y \in \rad(M)$.
Then
\begin{align*}
    0 &= \langle x \ast y, y \rangle \\
    &= \langle x +\langle x,y \rangle y,y \rangle \\
    &=
    \langle x,y \rangle + \langle x,y \rangle \langle y,y\rangle \\
    &=
    \langle x,y \rangle.
\end{align*}
Namely, we have $\langle x,y \rangle = 0$.
This implies that $x \ast y = x$, which contradicts the assumption that $x \in M \setminus \rad(M)$.

Thus, $x \ast y \in M \setminus \rad(M)$ for all $x,y \in M \setminus \rad(M)$.
Similarly, $x \ast^{-1} y \in M \setminus \rad(M)$ for all $x,y \in M \setminus \rad(M)$. 
Therefore, $M \setminus \rad(M)$ is a subquandle of $M$.
\end{proof}

\begin{prop} \label{prop: interaction-free symplectic quandles}
Let $M$ be a $R$-module with an antisymmetric bilinear form $\langle, \rangle\colon M \times M \to R$.
If $M \neq \rad(M)$, then the generalized symplectic quandle $(M,\ast)$ is an interaction-free union of $M \setminus \rad(M)$ and $\rad(M)$, i.e. 
\begin{equation*}
    M = (M \setminus \rad(M)) \bigsqcup^{\mathrm{free}} \rad(M).
\end{equation*}
\end{prop}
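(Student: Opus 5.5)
The plan is to reduce Proposition~\ref{prop: interaction-free symplectic quandles} to Lemma~\ref{lem: interaction-free of maximal trivial component} by identifying the maximal trivial component $D$ of $(M,\ast)$ with $\rad(M)$. Once $D=\rad(M)$ is established, the hypotheses $D\neq\emptyset$ (since $0\in\rad(M)$) and $M\setminus D\neq\emptyset$ (since $M\neq\rad(M)$) hold, and the lemma gives the decomposition directly. The previous lemma, which shows that $M\setminus\rad(M)$ is a subquandle, is consistent with this but is not needed separately once the lemma on maximal trivial components applies.

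For the inclusion $\rad(M)\subseteq D$, take $x\in\rad(M)$ and any $y\in M$. Then $x\ast y = x+\langle x,y\rangle y = x$ because $\langle x,y\rangle=0$. Also $y\ast x = y+\langle y,x\rangle x = y$, since antisymmetry gives $\langle y,x\rangle=-\langle x,y\rangle=0$. Hence $x\in D$.

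For the reverse inclusion $D\subseteq\rad(M)$, take $x\in D$ and any $y\in M$. From $x\ast y=x$ we get $\langle x,y\rangle y=0$. Taking $y=x\ast y'$ directly does not help, so I pair with $y$ via the form instead. This also gives $0$, since $\langle y,y\rangle=0$, which is not informative. Instead I use the condition $y\ast x=y$, which yields $\langle y,x\rangle x=0$.

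This reverse inclusion is the main obstacle. Over a general ring, $\langle x,y\rangle y=0$ need not force $\langle x,y\rangle=0$ when $y$ is torsion. The trick I would try first is to replace $y$ by $x+y$:
\begin{equation*}
0=\langle x,x+y\rangle(x+y)=\langle x,y\rangle x+\langle x,y\rangle y=\langle x,y\rangle x .
\end{equation*}
This alone is still not decisive. Next I pair against a suitable element $z$. From $\langle y,x\rangle x=0$ we get $\langle y,x\rangle\langle x,z\rangle=0$ for all $z$, and in particular with $z=y$ this gives $\langle x,y\rangle^2=0$.

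Over a reduced ring this yields $\langle x,y\rangle=0$; over an integral domain, which is the setting of Theorem~\ref{main 2: symplectic quandles}, it certainly does. So I would either carry out the identification $D=\rad(M)$ under that assumption, or, if the general statement is wanted, check whether a nilpotent value can actually occur. If $D\neq\rad(M)$ in general, the fallback is to verify the interaction-free decomposition directly by case analysis on $x,y$, using only $\rad(M)\subseteq D$. For $x\in\rad(M)$ or $y\in\rad(M)$, the computations in the second paragraph show $x\ast y=x$. For $x,y\notin\rad(M)$, the previous lemma shows the product stays in $M\setminus\rad(M)$. Together these give the decomposition without needing equality $D=\rad(M)$, and this case analysis is the route I would ultimately write down.
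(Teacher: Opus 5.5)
Your final case analysis is correct and is essentially the paper's proof: the paper also uses the preceding lemma for the case $x,y\in M\setminus\rad(M)$, and it checks $x\ast y=x$ and $y\ast x=y$ directly for $x\in M\setminus\rad(M)$, $y\in\rad(M)$, without ever identifying $\rad(M)$ with the maximal trivial component. You were right to drop the reduction to the maximal-trivial-component lemma, because $D=\rad(M)$ genuinely fails over non-reduced rings (e.g.\ $R=\mathbb{Z}/4$, $M=(\mathbb{Z}/2)^{2}$ with $\langle u,v\rangle=2$ gives a trivial quandle with $\rad(M)=0$), which is why the paper proves $D=\rad(M)$ only later and only over integral domains.
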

\begin{proof}
We have seen that both $M \setminus \rad(M)$ and $\rad(M)$ are subquandles of $(M,\ast)$.
We show that the operation $\ast$ coincides with the operation $\ast_{\mathrm{free}}$, where $\ast_{\mathrm{free}}$ denotes the operation on the interaction-free union $(M \setminus \rad(M)) \bigsqcup^{\mathrm{free}} \rad(M)$.

It is easy to see that $x \ast y = x \ast_{\mathrm{free}} y$ holds for all $x,y \in M \setminus \rad(M)$ or for all $x,y \in \rad(M)$.

For $x \in M \setminus \rad(M)$ and $y \in \rad(M)$, we have 
\begin{equation*}
    x \ast y 
    = x + \langle x,y \rangle y 
    = x 
    = x \ast_{\mathrm{free}} y
\end{equation*}
and 
\begin{equation*}
    y \ast x 
    = y + \langle y,x \rangle x 
    = y 
    = y \ast_{\mathrm{free}} x.
\end{equation*}
These imply that $x \ast y = x \ast_{\mathrm{free}} y$ for all $x,y \in M$.
Therefore, $(M,\ast)$ is an interaction-free union of $M \setminus \rad(M)$ and $\rad(M)$.
\end{proof}

%%%%%%%%%%%%%%%%%%%%%%%%%%%%%%%%%%%%%%%
\section{Good involutions on generalized symplectic quandles} \label{sec: good involutions on symplectic quandles}

In our previous work \cite{NY2604}, we studied the nonexistence of good involutions on symplectic quandles from the viewpoint of the properties of $R$ or $\langle, \rangle$.
In particular, when $R$ is an integral domain of characteristic 2, we showed the following:

\begin{thm}[{\cite[Theorem 5.3]{NY2604}}] \label{thm: main2 in NY}
Suppose that $R$ is an integral domain of characteristic 2. 
Let $(M,\ast)$ be a symplectic quandle over $R$.
If the antisymmetric bilinear form $\langle,\rangle\colon M \times M \to R$ is nondegenerate, then $\good(M,\ast) = \{\id_{M}\}$.
\end{thm}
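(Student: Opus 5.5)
The plan is to combine two facts already recorded in Section~\ref{sec: quandle and good involutions}: an involutory quandle has $\id$ as a good involution, and a faithful quandle has at most one good involution.

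First, since $R$ has characteristic $2$, we have $x \ast^{-1} y = x - \langle x,y\rangle y = x + \langle x,y\rangle y = x \ast y$. Hence $(M,\ast)$ is a kei, and by \cite[Proposition 3.4]{MR2657689} we get $\id_M \in \good(M,\ast)$.

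Second, I will show that $(M,\ast)$ is faithful. Then \cite[Proposition 5.1]{ta2025goodinvolutionsconjugationsubquandles} gives $\lvert\good(M,\ast)\rvert \le 1$, and so $\good(M,\ast)=\{\id_M\}$.

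\textbf{Faithfulness.} Suppose $s_a = s_b$, that is,
\begin{equation*}
\langle x,a\rangle a = \langle x,b\rangle b \quad \text{for all } x \in M.
\end{equation*}
If $a=0$, then $\langle x,b\rangle b = 0$ for all $x$. Because $M$ is free over a domain, it is torsion-free, so $\langle x,b\rangle=0$ for all $x$ or $b=0$. Either way $b \in \rad(M)=\{0\}$ by nondegeneracy. The case $b=0$ is symmetric.

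Now assume $a,b \neq 0$. Let $K$ be the fraction field of $R$. Since $M \cong R^n$ embeds in $K^n$, the form extends $K$-bilinearly. By nondegeneracy choose $x$ with $\langle x,a\rangle \neq 0$. Then $\langle x,b\rangle b = \langle x,a\rangle a$ is a nonzero vector, so $\langle x,b\rangle \neq 0$ and $b = t a$ for some $t \in K^\times$. Substituting $b=ta$ back gives
\begin{equation*}
\langle x,b\rangle b = t^2 \langle x,a\rangle a \quad \text{for all } x.
\end{equation*}
Choosing the same $x$ and using torsion-freeness of $K^n$, we obtain $t^2 = 1$. In characteristic $2$ this reads $(t-1)^2 = t^2 - 1 = 0$, so $t=1$ because $K$ is a field. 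Hence $a=b$.

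\textbf{Main obstacle.} The delicate step is the scalar comparison $b = ta$. This is where freeness of finite rank (giving the embedding into $K^n$), the integral domain hypothesis, and nondegeneracy are genuinely used. Without them, torsion elements or a nonzero radical break faithfulness, which is exactly the situation handled later by the radical decomposition. The final deduction $t=1$ from $t^2=1$ is where characteristic $2$ enters.
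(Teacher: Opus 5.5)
Your argument is correct and follows the route the paper itself uses to recover this result: Proposition~\ref{prop: faithfulness} shows that nondegeneracy gives faithfulness in characteristic $2$, and then Ta's bound $\lvert\good(M,\ast)\rvert\le 1$ for faithful quandles, together with $\id_M\in\good(M,\ast)$ for a kei, gives $\good(M,\ast)=\{\id_M\}$. The only difference is in the faithfulness step. You compare $a$ and $b$ through a scalar $t\in K^\times$ inside $K^n$. The paper instead clears denominators inside $M$: from $ax=by$ and $bx=ay$ it gets $(a^2-b^2)y=0$, so it does not need the module to be free of finite rank.
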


When $R$ is an integral domain of characteristic 2, faithfulness of generalized symplectic quandles over $R$ can be characterized by the nondegeneracy of the bilinear form.

\begin{prop} \label{prop: faithfulness}
Let $R$ be an integral domain of characteristic 2 and $(M,\ast)$ a generalized symplectic quandle over $R$.
Then, $(M,\ast)$ is faithful if and only if the bilinear form $\langle, \rangle$ is nondegenerate.
\end{prop}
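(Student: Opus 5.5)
We prove the two implications separately, using throughout that in characteristic $2$ the operation is $x \ast y = x + \langle x,y\rangle y$ with $-1 = 1$. Unwinding the definitions, $s_{y} = s_{z}$ means
\begin{equation*}
    \langle x,y\rangle y = \langle x,z\rangle z \quad \text{for all } x \in M.
\end{equation*}

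For the direction ``faithful $\Rightarrow$ nondegenerate'', argue by contraposition. If $\langle,\rangle$ is degenerate, pick $0 \neq r \in \rad(M)$. Then $s_{r}(x) = x + \langle x,r\rangle r = x$ for every $x$, so $s_{r} = \id_{M} = s_{0}$. Since $r \neq 0$, the quandle $(M,\ast)$ is not faithful.

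For the converse, assume $\rad(M) = \{0\}$ and $s_{y} = s_{z}$; we must show $y = z$.

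\begin{itemize}
    \item If $y \in \rad(M)$, then $s_{y} = \id_{M}$, so $\langle x,z\rangle z = 0$ for all $x$. Since $\rad(M) = 0$, Proposition~\ref{prop: torsion module and radical} gives $T(M) = 0$, so $M$ is torsion-free over the integral domain $R$. Hence for each $x$ either $\langle x,z\rangle = 0$ or $z = 0$. If $z \neq 0$, this forces $z \in \rad(M) = 0$, a contradiction. So $z = 0 = y$. The case $z \in \rad(M)$ is symmetric.
    \item If $y, z \notin \rad(M)$, choose $x$ with $a := \langle x,y\rangle \neq 0$, and set $b := \langle x,z\rangle$, so that $a y = b z$.
    \begin{itemize}
        \item Pairing $a y = b z$ with $z$ gives $a\langle y,z\rangle = 0$, hence $\langle y,z\rangle = 0$.
        \item Torsion-freeness and $y \neq 0$ give $b \neq 0$.
        \item For an arbitrary $w$, the identity $\langle w,y\rangle y = \langle w,z\rangle z$, multiplied by $a$, becomes
        \begin{equation*}
            \langle w,y\rangle b z = a\langle w,z\rangle z, \quad\text{so}\quad \langle w,y\rangle b = a\langle w,z\rangle .
        \end{equation*}
        Pairing $a y = b z$ with $w$ instead gives $a\langle y,w\rangle = b\langle z,w\rangle$.
        \item Combining these two relations (using antisymmetry, with signs irrelevant in characteristic $2$) yields $a^{2}\langle w,y\rangle = b^{2}\langle w,y\rangle$ for all $w$. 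Taking $w = x$ gives $a^{2} = b^{2}$, i.e.\ $(a+b)^{2} = 0$ in characteristic $2$, so $a = b$ because $R$ is a domain.
        \item Then $a(y - z) = 0$ with $a \neq 0$, and torsion-freeness gives $y = z$.
    \end{itemize}
\end{itemize}

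This completes both directions. The main obstacle is the last case: we must pin down the scalar relating $y$ and $z$ without working over a field, which is why we compute $a^{2} = b^{2}$ and then use characteristic $2$ together with the integral domain property to conclude $a = b$.
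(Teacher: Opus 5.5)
Your proof is correct and follows essentially the same route as the paper: the contrapositive via $s_r = s_0$ for $0 \neq r \in \rad(M)$, and for the converse the reduction to $ay = bz$ with $a,b \neq 0$, then $a^{2} = b^{2}$, characteristic $2$ to get $a = b$, and torsion-freeness from $T(M) \subseteq \rad(M) = \{0\}$. The only difference is cosmetic. The paper first uses nondegeneracy to derive the module identity $by = az$ (in your notation) and extracts $a^{2}=b^{2}$ from that, whereas you combine two scalar relations. In fact, simply pairing $ay = bz$ with $x$ already gives $a^{2} = b^{2}$, since $\langle x, ay\rangle = a^{2}$ and $\langle x, bz\rangle = b^{2}$.
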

\begin{proof}
Suppose that the bilinear form $\langle, \rangle$ is nondegenerate.
If $s_x = s_y$ holds for $x,y \in M$, then for any $z \in M$, we have 
\begin{equation}\label{eq: faithfulness}
    \langle z,x \rangle x = \langle z,y \rangle y.
\end{equation}
We show that $x = y$.

First, if $x = 0$, then by Equation \eqref{eq: faithfulness}, we have $0 = \langle z,y \rangle y$, i.e. $y \in \rad(M)$ or $y \in T(M)$. By Proposition~\ref{prop: torsion module and radical}, it follows that $y \in \rad(M)$, which implies that $y = 0$ by the assumption that $\langle, \rangle$ is nondegenerate.
Thus, we have $x = y = 0$.

Next, if $x \neq 0$, then by the nondegeneracy of $\langle,\rangle$, there exists $z_0 \in M$ such that $\langle z_0,x \rangle \neq 0$. 
By Equation \eqref{eq: faithfulness}, we have 
\begin{equation*}
    \langle z_0,x \rangle x = \langle z_0,y \rangle y.
\end{equation*}
This implies that $\langle z_0,y \rangle \neq 0$.
Indeed, if $\langle z_0,y \rangle = 0$, then $\langle z_0,x \rangle x = 0$, i.e. $x = 0$, which contradicts our assumption that $x \neq 0$.
Thus, there exists $a,b\in R\setminus\{0\}$ such that $ax =by$.

By applying $b$ to both sides of $\langle z,x \rangle x = \langle z,y \rangle y$, it follows that 
\begin{equation*}
    b\langle z,x \rangle x 
    = b\langle z,y \rangle y
    =\langle z,y \rangle (by)
    =\langle z,y \rangle (ax),
\end{equation*}
i.e. $(b \langle z,x \rangle - a \langle z,y \rangle)x = 0$.
Since $x \notin \rad(M)$ implies $x \notin T(M)$ by Proposition~\ref{prop: torsion module and radical} and $x \neq 0$, 
it follows that $b \langle z,x \rangle = a \langle z,y \rangle$, 
i.e. $\langle z, bx - ay \rangle = 0$ holds for $z \in M$.
By the nondegeneracy of $\langle, \rangle$, it follows that $bx = ay$.

From $ax = by$, we have $abx - b^2 y = 0$. From $bx = ay$, it follows that
\begin{equation*}
    0 = abx - b^2y = a(ay) -b^2y = (a^2 - b^2)y, 
\end{equation*}
i.e. $(a^2 - b^2) y = 0$. Since $y \notin T(M)$ and $y \neq 0$, $a^2 = b^2$, i.e. $a = b$.

Since we have $ax = by$ and $a = b$, it follows that $a(x - y) = 0$.
Since $a \neq 0$, $x - y \in T(M)$. By the nondegeneracy of $\langle,\rangle$, it follows that $T(M) = \{0\}$. Thus we have $x - y = 0$, i.e. $x = y$.

Therefore, we show that $(M,\ast)$ is faithful.

Conversely, suppose that $\langle,\rangle$ is degenerate.
Then there exists $x \in \rad(M)$ such that $x \neq 0$.
For all $y \in M$, we have $y \ast x = y = y \ast 0$.
This implies that $s_{x} = s_{0}$, i.e. $(M,\ast)$ is not faithful.
Therefore, considering the contraposition, we show that if $(M,\ast)$ is faithful, then $\langle,\rangle$ is nondegenerate.
\end{proof}

\begin{remark}
Suppose that $M \neq \{0\}$ and the characteristic of $R$ is not $2$.
Then $(M,\ast)$ is not faithful even if $\langle, \rangle$ is nondegenerate.

Indeed,  
for any $z \in M$ and any $x \in M \setminus \{0\}$, we have $z \ast x = z + \langle z,x \rangle x = z + \langle z, -x \rangle (-x) = z \ast (-x)$, i.e. $s_{x} = s_{-x}$.
Since $x \neq -x$ for any $x \neq 0$, it follows that $(M,\ast)$ is not faithful.
\end{remark}

\begin{cor}
Let $R$ be an integral domain, $M$ a module over $R$ and $\langle,\rangle \colon M \times M \to R$ an antisymmetric bilinear form on $M$.
Then the induced bilinear form on the quotient module $M{/\rad(M)}$ is nondegenerate.
In particular, if the characteristic of $R$ is 2, then the generalized symplectic quandle $(M{/\rad(M)},\ast)$ is faithful, i.e. we obtain
\begin{equation*}
    \good(M{/\rad(M)},\ast) = \{\id\}.
\end{equation*} 
\end{cor}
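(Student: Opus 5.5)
Two things need proving: the induced form on $M/\rad(M)$ is nondegenerate, and, in characteristic $2$, $\good(M/\rad(M),\ast)=\{\id\}$.

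\textbf{Well-definedness and nondegeneracy.} Write $\bar M = M/\rad(M)$ and define $\langle \bar x,\bar y\rangle \coloneqq \langle x,y\rangle$. This is well defined. If $x' = x+r$ and $y'=y+r'$ with $r,r'\in\rad(M)$, then bilinearity and antisymmetry give $\langle x',y'\rangle=\langle x,y\rangle$, since every pairing involving $r$ or $r'$ vanishes. The induced form is clearly bilinear and alternating, because $\langle \bar x,\bar x\rangle=\langle x,x\rangle=0$. For nondegeneracy, suppose $\bar x\in\rad(\bar M)$. Then $\langle x,y\rangle=0$ for every $y\in M$, since every $\bar y\in\bar M$ lifts to some $y\in M$. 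Hence $x\in\rad(M)$, so $\bar x=0$. Thus $\rad(\bar M)=\{0\}$, which is nondegeneracy by the remark following the definition of the radical. Note that $\bar M$ is an $R$-module with an alternating bilinear form, so $(\bar M,\ast)$ is a generalized symplectic quandle over $R$. Its operation is $\bar x\ast\bar y=\overline{x+\langle x,y\rangle y}$.

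\textbf{Characteristic $2$.} Now assume $R$ is an integral domain of characteristic $2$. Proposition~\ref{prop: faithfulness} applies to $(\bar M,\ast)$ and shows that it is faithful. Ta's result (Proposition 5.1 of \cite{ta2025goodinvolutionsconjugationsubquandles}, recalled in Section~\ref{sec: quandle and good involutions}) then gives $\lvert\good(\bar M,\ast)\rvert\le 1$. In characteristic $2$ we have $\bar x\ast^{-1}\bar y=\bar x-\langle x,y\rangle \bar y=\bar x+\langle x,y\rangle\bar y=\bar x\ast\bar y$, so the quandle is involutory. Therefore $\id\in\good(\bar M,\ast)$, and we conclude that $\good(\bar M,\ast)=\{\id\}$.

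\textbf{Main obstacle.} There is little difficulty here. The only point to check carefully is that Proposition~\ref{prop: faithfulness} needs no hypothesis beyond $R$ being an integral domain of characteristic $2$ with a nondegenerate form. Torsion-freeness is not an extra requirement: its proof obtains $T(\bar M)\subseteq\rad(\bar M)=\{0\}$ from Proposition~\ref{prop: torsion module and radical}. That proposition applies to any $R$-module with such a form over an integral domain, so no further assumption on $\bar M$ is needed.
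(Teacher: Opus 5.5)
Your proof is correct and matches the argument the paper intends. The paper gives no separate proof; the corollary is stated right after Proposition~\ref{prop: faithfulness}, and it follows, as in your write-up, from $\rad(M/\rad(M))=\{0\}$, faithfulness via that proposition, Ta's bound $\lvert\good\rvert\le 1$ for faithful quandles, and $\id\in\good$ because the quandle is a kei in characteristic $2$. You also correctly make explicit the existence step that the paper's ``i.e.'' glosses over, since faithfulness alone only gives $\lvert\good\rvert\le 1$.
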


%%%
Moreover, every good involution preserves the decomposition of generalized symplectic quandles:
\begin{prop}\label{prop: good involution preserves the decomposition of generalized symplectic quandles}
Let $R$ be an integral domain, $M$ a module over $R$ and $\langle, \rangle \colon M \times M \to R$ an antisymmetric bilinear form on $M$.
Then every good involution $\rho \colon M \to M$ on the generalized symplectic quandle $(M,\ast)$ satisfies $\rho(\rad(M)) = \rad(M)$.
\end{prop}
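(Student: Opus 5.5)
The plan is to identify $\rad(M)$ with the maximal trivial component $D$ of $(M,\ast)$. Once that is done, the claim follows from the first paragraph of the proof of Theorem~\ref{main thm: decomp of maximal trivial component}. That argument shows, for an arbitrary quandle, that every good involution $\rho$ satisfies $\rho(D)\subseteq D$, and hence $\rho(D)=D$ because $\rho$ is an involution. It uses only the two axioms of a good involution, $\rho(x\ast y)=\rho(x)\ast y$ and $y\ast\rho(x)=y\ast^{-1}x$, and it does not need the hypotheses $D\neq\emptyset$ or $Q\setminus D\neq\emptyset$. So the whole task reduces to proving $\rad(M)=D$.

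For $\rad(M)\subseteq D$: let $x\in\rad(M)$. Then for every $y\in M$ we have $x\ast y=x+\langle x,y\rangle y=x$. Using antisymmetry, we also have $y\ast x=y+\langle y,x\rangle x=y$. Hence $x\in D$.

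For $D\subseteq\rad(M)$: let $x\in D$. Then $y\ast x=y$ for all $y$, that is, $\langle y,x\rangle x=0$ for all $y\in M$. I split into two cases, as in the earlier proofs in this section.
\begin{itemize}
\item If $x\in T(M)$, then Proposition~\ref{prop: torsion module and radical} gives $x\in\rad(M)$ directly. This is where the hypothesis that $R$ is an integral domain enters.
\item If $x\notin T(M)$, then $x$ is not annihilated by any nonzero scalar. So $\langle y,x\rangle x=0$ forces $\langle y,x\rangle=0$ for every $y$, and therefore $x\in\rad(M)$.
\end{itemize}
This case split is the only step with any content. The main point to be careful about is that $M$ may have torsion, so one cannot simply cancel $x$; Proposition~\ref{prop: torsion module and radical} is exactly what covers the torsion case.

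Finally, combining $\rad(M)=D$ with the invariance $\rho(D)=D$ established in the proof of Theorem~\ref{main thm: decomp of maximal trivial component} gives $\rho(\rad(M))=\rad(M)$ for every $\rho\in\good(M,\ast)$. Alternatively, the argument can be repeated directly: for $x\in\rad(M)$ and every $y\in M$, we have $\rho(x)\ast y=\rho(x\ast y)=\rho(x)$ and $y\ast\rho(x)=y\ast^{-1}x=y$. Thus $\rho(x)\in D=\rad(M)$, and applying $\rho$ once more yields equality.
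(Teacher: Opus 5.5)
Your proof is correct, but it is organized differently from the paper's. The paper argues directly and uses only the second axiom of a good involution. For $y\in\rad(M)$ it derives $\langle x,\rho(y)\rangle\rho(y)=-\langle x,y\rangle y=0$ for all $x\in M$. It then applies the same torsion/non-torsion split you use (via Proposition~\ref{prop: torsion module and radical}) to the single element $\rho(y)$, concluding $\rho(y)\in\rad(M)$, and the involution argument finishes.

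You instead first prove $\rad(M)=D$, which the paper only establishes later as Lemma~\ref{lem: radical is maximal trivial component}, with exactly your case split. You then invoke the purely quandle-theoretic invariance $\rho(D)=D$ from the proof of Theorem~\ref{main thm: decomp of maximal trivial component}, which additionally uses the first axiom $\rho(x\ast y)=\rho(x)\ast y$.

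The paper's route is slightly more economical at that point. The first axiom is unnecessary there, because in the symplectic setting the condition $y\ast z=y$ for all $y$ already forces $z\in\rad(M)$. Your route makes it transparent that the proposition is a special case of a general fact: good involutions preserve the maximal trivial component. All symplectic-specific input is isolated in the identification $\rad(M)=D$, which is exactly how the paper later deduces Theorem~\ref{main thm: symplectic qdle}.
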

\begin{proof}
By the second condition of good involutions, we have $x \ast \rho(y) = x \ast^{-1} y$ for all $x,y \in M$.
Hence, we have 
\begin{equation*}
    \langle x, \rho(y) \rangle \rho(y) = -\langle x,y \rangle y.
\end{equation*}
Suppose $y \in \rad(M)$. If $\rho(y) = 0$, then it is obvious that $\rho(y) \in \rad(M)$. 
Thus, we may assume that $\rho(y) \neq 0$.
By the above equation, we have 
\begin{equation*}
    \langle x, \rho(y) \rangle \rho(y) = 0.
\end{equation*}
If $\langle x,\rho(y) \rangle = 0$ for any $x \in M$, then $\rho(y) \in \rad(M)$. 
Otherwise, since $\rho(y) \neq 0$, $\rho(y) \in T(M)$.
In this case, Proposition~\ref{prop: torsion module and radical} implies that $\rho(y) \in \rad(M)$.
Therefore, we obtain $\rho(\rad(M)) \subseteq \rad(M)$.

Since $\rho$ is an involution, applying $\rho$ to both sides gives 
\begin{equation*}
    \rad(M) = \rho^{2}(\rad(M)) \subseteq \rho(\rad(M)) \subseteq \rad(M),
\end{equation*}
which implies that $\rho(\rad(M)) = \rad(M)$.
\end{proof}

%%%%%%%%%%%
\subsection{Decomposition of good involutions on generalized symplectic quandles}

In this section, we consider good involutions on generalized symplectic quandles $(M,\ast)$ over an integral domain $R$ of arbitrary characteristic equipped with a general antisymmetric bilinear form$\langle, \rangle$.
Our key idea is to regard a generalized symplectic quandle as an interaction-free union of quandles, which allows us to apply the results on good involutions to this setting.

First, we show that if $R$ is an integral domain, then $\rad(M)$ is the maximal trivial component of $(M,\ast)$.

\begin{lemma}\label{lem: radical is maximal trivial component}
Let $R$ be an integral domain and $(M,\ast)$ a generalized symplectic quandle over $R$.
Then the radical $\rad(M)$ is the maximal trivial component of $(M,\ast)$.
\end{lemma}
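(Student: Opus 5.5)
We must show $D=\rad(M)$, where $D=\{x\in M \mid x\ast y=x,\ y\ast x=y \text{ for all } y\in M\}$. The plan is to prove the two inclusions separately, using only the formula $x\ast y=x+\langle x,y\rangle y$, antisymmetry, and Proposition~\ref{prop: torsion module and radical}.

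\emph{The inclusion $\rad(M)\subseteq D$.} Let $x\in\rad(M)$. For every $y\in M$ we have $x\ast y=x+\langle x,y\rangle y=x$, since $\langle x,y\rangle=0$. By antisymmetry $\langle y,x\rangle=-\langle x,y\rangle=0$, so $y\ast x=y$ as well. Hence $x\in D$; this is the same computation already used in Proposition~\ref{prop: interaction-free symplectic quandles}.

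\emph{The inclusion $D\subseteq\rad(M)$.} Let $x\in D$, so in particular $y\ast x=y$ for all $y\in M$. This means $\langle y,x\rangle x=0$ for all $y$. There are two cases.
\begin{itemize}
\item If $x\in T(M)$, then Proposition~\ref{prop: torsion module and radical} gives $x\in\rad(M)$ directly.
\item If $x\notin T(M)$, then $rx=0$ forces $r=0$. Taking $r=\langle y,x\rangle$ shows $\langle y,x\rangle=0$ for every $y$, and by antisymmetry $\langle x,y\rangle=0$ for every $y$. Thus $x\in\rad(M)$.
\end{itemize}
The hypothesis that $R$ is an integral domain is used only through Proposition~\ref{prop: torsion module and radical}, which guarantees $T(M)\subseteq\rad(M)$. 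Alternatively, one could use the condition $x\ast y=x$, which gives $\langle x,y\rangle y=0$; but then the element whose torsion status matters is $y$, not $x$, so the condition $y\ast x=y$ is the more convenient one.

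The only delicate point is this torsion dichotomy in the second inclusion. It mirrors exactly the argument in the proof of Proposition~\ref{prop: good involution preserves the decomposition of generalized symplectic quandles}, so I do not expect any real obstacle. Combining the two inclusions gives $\rad(M)=D$.
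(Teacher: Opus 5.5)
Your proof is correct and is essentially the paper's argument: the inclusion $\rad(M)\subseteq D$ is immediate, and for $D\subseteq\rad(M)$ the paper likewise uses $y\ast x=y$ to get $\langle y,x\rangle x=0$ and then invokes $T(M)\subseteq\rad(M)$ from Proposition~\ref{prop: torsion module and radical}. The only cosmetic difference is that you split cases on whether $x\in T(M)$, whereas the paper splits on whether $\langle y,x\rangle=0$ for all $y$; these amount to the same dichotomy.
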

\begin{proof}
Let $D$ denote the maximal trivial component of $(M,\ast)$.
By the definition, $\rad(M) \subseteq D$.
We show the other inclusion.

For all $x \in D$ and $y \in M$, we have $y \ast x = y$.
Since $y \ast x = y + \langle y,x\rangle x$, it follows that 
\begin{equation*}
    \langle y,x\rangle x = 0.
\end{equation*}
If $\langle y,x\rangle = 0$ for all $y \in M$, then $x \in \rad(M)$. If there exists $y \in M$ such that $\langle y,x\rangle \neq 0$, then $x \in T(M)$.
By Proposition~\ref{prop: torsion module and radical}, $x \in \rad(M)$ in either cases, which shows that $D \subseteq \rad(M)$.

Therefore, we obtain $D = \rad(M)$.
\end{proof}

\begin{thm} \label{main thm: symplectic qdle}
Let $R$ be an integral domain and $(M,\ast)$ a generalized symplectic quandle over $R$.
If $M \neq \rad(M)$, then there is a canonical bijection
\begin{equation*}
    \good(M,\ast) \cong \good(M\setminus \rad(M),\ast) \times \mathrm{Inv}(\rad(M)).
\end{equation*} 
\end{thm}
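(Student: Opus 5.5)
This is essentially a direct specialization of Theorem~\ref{main thm: decomp of maximal trivial component} to generalized symplectic quandles. The plan is to identify the maximal trivial component $D$ of $(M,\ast)$ with $\rad(M)$, check the two nonemptiness hypotheses, and then invoke that theorem verbatim.

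First, since $R$ is an integral domain, Lemma~\ref{lem: radical is maximal trivial component} gives $D = \rad(M)$. Next I verify the hypotheses of Theorem~\ref{main thm: decomp of maximal trivial component}.
\begin{itemize}
\item $D \neq \emptyset$ holds because $\rad(M)$ is a submodule, so $0 \in \rad(M)$.
\item $M \setminus D = M\setminus\rad(M) \neq \emptyset$ is precisely the assumption $M \neq \rad(M)$.
\end{itemize}
Applying Theorem~\ref{main thm: decomp of maximal trivial component}, the map $\rho \mapsto (\rho|_{M\setminus\rad(M)}, \rho|_{\rad(M)})$ is a bijection
\begin{equation*}
\good(M,\ast) \to \good(M\setminus\rad(M),\ast)\times\operatorname{Inv}(\rad(M)).
\end{equation*}
This is the canonical bijection in the statement.

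For context, the fact that $\rho$ preserves $\rad(M)$ is also recorded in Proposition~\ref{prop: good involution preserves the decomposition of generalized symplectic quandles}. This is consistent with the general argument but not needed. Likewise, Proposition~\ref{prop: interaction-free symplectic quandles} realizes $M$ as the interaction-free union $(M\setminus\rad(M))\bigsqcup^{\mathrm{free}}\rad(M)$, matching Lemma~\ref{lem: interaction-free of maximal trivial component}.

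There is no real obstacle here. The only substantive input is the identification $D=\rad(M)$, which is where the integral domain hypothesis is used: torsion elements lie in the radical by Proposition~\ref{prop: torsion module and radical}. That identification has already been established in Lemma~\ref{lem: radical is maximal trivial component}.
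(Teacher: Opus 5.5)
Your proposal is correct and is essentially the paper's proof: identify $\rad(M)$ with the maximal trivial component via Lemma~\ref{lem: radical is maximal trivial component}, use $0 \in \rad(M)$ and $M \neq \rad(M)$ for the two nonemptiness hypotheses, and apply Theorem~\ref{main thm: decomp of maximal trivial component}. The paper also cites the interaction-free decomposition of Proposition~\ref{prop: interaction-free symplectic quandles} along the way, but, as you observe, that step is not logically needed.
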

\begin{proof}
Since the radical $\rad(M)$ of $M$ is a trivial quandle, we have $\good(\rad(M),\ast) = \mathrm{Inv}(\rad(M))$.
Moreover, by Proposition~\ref{prop: interaction-free symplectic quandles}, the generalized symplectic quandle $(M,\ast)$ is an interaction-free union of the subquandles $(M \setminus \rad(M),\ast), (\rad(M),\ast)$.
Furthermore, if $R$ is an integral domain, by Lemma~\ref{lem: radical is maximal trivial component}, the radical $\rad(M)$ is the maximal trivial component of $(M,\ast)$.

Since $M \neq \rad(M)$ and $0 \in \rad(M)$, it follows that $\rad(M) \neq \emptyset$ and $M \setminus \rad(M) \neq \emptyset$.
Therefore, by Theorem~\ref{main thm: decomp of maximal trivial component}, we obtain the desired bijection.
\end{proof}

Since the radical $\rad(M)$ of $M$ always contains the zero in $M$, we have $\mathrm{Inv}(\rad(M)) \neq \emptyset$.
Thus we obtain the following:

\begin{cor}\label{cor: emptyness of good inv on symplectic quandles}
Let $R$ be an integral domain and $(M,\ast)$ a generalized symplectic quandle over $R$.
Then, $\good(M,\ast) = \emptyset$ if and only if $\good(M\setminus \rad(M),\ast) = \emptyset$.
\end{cor}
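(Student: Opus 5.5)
The plan is to read the corollary off Theorem~\ref{main thm: symplectic qdle}, handling separately the degenerate case $M = \rad(M)$, which that theorem excludes.

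\emph{Case $M \neq \rad(M)$.} Theorem~\ref{main thm: symplectic qdle} gives a bijection
\begin{equation*}
\good(M,\ast) \cong \good(M\setminus \rad(M),\ast) \times \mathrm{Inv}(\rad(M)).
\end{equation*}
Since $0 \in \rad(M)$, the set $\rad(M)$ is nonempty. Hence $\mathrm{Inv}(\rad(M))$ is nonempty, as it contains $\id_{\rad(M)}$.

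A Cartesian product $A \times B$ with $B \neq \emptyset$ is empty if and only if $A$ is empty. Therefore $\good(M,\ast) = \emptyset$ if and only if $\good(M\setminus\rad(M),\ast) = \emptyset$.

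\emph{Case $M = \rad(M)$.} Here $(M,\ast)$ is a trivial quandle, so $\good(M,\ast) = \mathrm{Inv}(M)$, which contains $\id_M$ and is nonempty. On the other side, $M \setminus \rad(M) = \emptyset$. Under the convention that the empty quandle carries the empty map as its unique (vacuously good) involution, $\good(\emptyset,\ast)$ is nonempty as well. So both sides of the equivalence are false, and the statement holds.

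The only real obstacle is this degenerate case, since the empty set does not fit the paper's notion of a quandle. If one prefers not to adopt the convention, the cleanest fix is to state the corollary under the hypothesis $M \neq \rad(M)$, exactly as in Theorem~\ref{main thm: symplectic qdle}. The nontrivial content is already supplied by that theorem, through Lemma~\ref{lem: radical is maximal trivial component} and Theorem~\ref{main thm: decomp of maximal trivial component}.
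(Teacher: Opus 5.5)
Your proof is correct and follows the paper's route: the corollary is read off from Theorem~\ref{main thm: symplectic qdle}, since $\mathrm{Inv}(\rad(M)) \ni \id_{\rad(M)}$ is nonempty, so the product is empty exactly when $\good(M\setminus\rad(M),\ast)$ is. Your separate handling of the case $M=\rad(M)$, which the paper passes over silently, is a sensible addition. Note that Definition~\ref{def: quandle} does not actually exclude the empty set, so the empty map is vacuously a good involution on $\emptyset$ with no extra convention needed.
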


%%%%%%%%%%%
\subsection{When $R$ is an integral domain of characteristic 2}

When we do not assume that the bilinear form $\langle,\rangle$ is nondegenerate, we obtain the following:

\begin{lemma}\label{lem: good inv is expansion char2}
Let $R$ be an integral domain of characteristic 2 and $(M,\ast)$ a generalized symplectic quandle over $R$.
If $M \neq \rad(M)$, then for any $y \in M \setminus \rad(M)$ and any $\rho \in \good(M\setminus \rad(M))$, there exists $\lambda \in \mathrm{Frac}(R)$ such that $\rho(y) = \lambda y$ in $M \otimes_{R}\mathrm{Frac}(R)$.
\end{lemma}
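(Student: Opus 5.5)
Write $N \coloneqq M\setminus \rad(M)$, and let $\rho\in\good(N)$ and $y\in N$ be given. The plan is to extract the identity $s_{\rho(y)} = s_y^{-1}$ on $N$ from the second axiom of good involutions. In characteristic $2$ we have $x\ast^{-1}y = x-\langle x,y\rangle y = x+\langle x,y\rangle y = x\ast y$. The second axiom therefore gives, for every $x\in N$,
\begin{equation*}
    \langle x,\rho(y)\rangle \rho(y) = \langle x,y\rangle y .
\end{equation*}
Note that $\rho(y)\in N$, so $\rho(y)\notin\rad(M)$. By Proposition~\ref{prop: torsion module and radical}, neither $y$ nor $\rho(y)$ is torsion. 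Hence both have nonzero image in $V\coloneqq M\otimes_R \mathrm{Frac}(R)$, because the kernel of $M\to V$ is exactly $T(M)$ when $R$ is a domain.

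Next I choose a suitable test vector $x\in N$ with $\langle x,y\rangle\neq 0$. Since $y\notin\rad(M)$ and the form is antisymmetric, there is some $z\in M$ with $\langle z,y\rangle \neq 0$. If $z\in N$, take $x=z$. Otherwise $z\in\rad(M)$, which forces $\langle z,y\rangle = 0$, a contradiction. So $x=z\in N$ automatically. With this $x$, the displayed identity gives $\langle x,y\rangle y = \langle x,\rho(y)\rangle\rho(y)$.

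If $\langle x,\rho(y)\rangle$ were $0$, then $\langle x,y\rangle y=0$ with $\langle x,y\rangle\neq0$, so $y\in T(M)$. This contradicts the previous paragraph. Hence $\mu\coloneqq\langle x,\rho(y)\rangle\neq 0$. In $V$, where nonzero scalars of $R$ are invertible, we obtain
\begin{equation*}
    \rho(y)=\lambda y, \qquad \lambda\coloneqq \langle x,y\rangle\mu^{-1}\in\mathrm{Frac}(R).
\end{equation*}

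The only delicate point is making sure the witness $x$ lies in the subquandle $N$, since $\rho$ is only a good involution on $N$ and the identity is only available for $x\in N$. The antisymmetry argument above handles this. One could alternatively note that $\langle x,y\rangle$ depends on $x$ only modulo $\rad(M)$. Everything else is the routine passage to $M\otimes_R\mathrm{Frac}(R)$, using that the torsion submodule is the kernel of localization.
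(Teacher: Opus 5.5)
Your proof is correct and follows essentially the same route as the paper. The paper also picks a witness $x \in M \setminus \rad(M)$ with $\langle x,y\rangle \neq 0$, and then defers to the argument of Lemma 5.2 of the authors' earlier paper. That argument is the identity $\langle x,\rho(y)\rangle\rho(y) = \langle x,y\rangle y$ combined with the torsion-versus-radical reasoning that you spell out.
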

\begin{proof}
Since $y \notin \rad(M)$, there exists $x \in M \setminus \rad(M)$ such that $\langle x,y \rangle \neq 0$.
Thus, in the same way as \cite[Lemma 5.2]{NY2604}, there exists $\lambda \in \mathrm{Frac}(R)$ such that $\rho(y) = \lambda y$ in $M \otimes_{R}\mathrm{Frac}(R)$.
\end{proof}

\begin{prop}\label{prop: good inv is identity cha2}
Let $R$ be an integral domain of characteristic 2 and $(M,\ast)$ a generalized symplectic quandle over $R$.
If the underlying module $M$ is torsion-free and $M \neq \rad(M)$, then for any $\rho \in \good(M)$,
we have $\rho|_{M\setminus \rad(M)} = \id_{M\setminus \rad(M)}$.
\end{prop}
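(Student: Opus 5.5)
The plan is to compute $\rho(y)$ for each $y \in M\setminus\rad(M)$ directly from the second axiom of good involutions, using Lemma~\ref{lem: good inv is expansion char2} to know that $\rho(y)$ is proportional to $y$. Fix $\rho \in \good(M,\ast)$. By Proposition~\ref{prop: good involution preserves the decomposition of generalized symplectic quandles}, $\rho(\rad(M)) = \rad(M)$, hence $\rho(M\setminus\rad(M)) = M\setminus\rad(M)$. The axioms of a good involution involve only the operation of $M$, so the restriction $\rho|_{M\setminus\rad(M)}$ lies in $\good(M\setminus\rad(M),\ast)$; this is the map $\Phi$ of Theorem~\ref{main thm: decomp of maximal trivial component}, applied via Lemma~\ref{lem: radical is maximal trivial component}. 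Fix $y \in M\setminus\rad(M)$. Lemma~\ref{lem: good inv is expansion char2} then gives $\lambda \in \mathrm{Frac}(R)$ with $\rho(y) = \lambda y$ in $M\otimes_R \mathrm{Frac}(R)$.

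The first step is to turn this into an identity in $M$ itself. I write $\lambda = b/a$ with $a,b \in R$ and $a \neq 0$. Then $a\rho(y) - by$ maps to $0$ in $M\otimes_R\mathrm{Frac}(R)$. Since $M$ is torsion-free, the localization map $M \to M\otimes_R \mathrm{Frac}(R)$ is injective, so $a\rho(y) = by$ holds in $M$.

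The second step uses characteristic $2$. Here $\ast = \ast^{-1}$, so the second good-involution axiom reads $x\ast\rho(y) = x\ast y$, that is,
\begin{equation*}
\langle x,\rho(y)\rangle \rho(y) = \langle x,y\rangle y \quad \text{for all } x \in M.
\end{equation*}
Multiplying by $a^2$ and using bilinearity gives
\begin{equation*}
a^2\langle x,y\rangle y = \langle x, a\rho(y)\rangle\, a\rho(y) = \langle x, by\rangle\, by = b^2\langle x,y\rangle y.
\end{equation*}
Hence $(a^2-b^2)\langle x,y\rangle y = 0$. I choose $x$ with $\langle x,y\rangle \neq 0$, which is possible since $y \notin \rad(M)$. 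Note $y \neq 0$, and $M$ is torsion-free and $R$ is a domain, so $a^2 = b^2$. In characteristic $2$ this says $(a-b)^2 = 0$, so $a = b$. Then $a(\rho(y) - y) = 0$ with $a \neq 0$, and torsion-freeness gives $\rho(y) = y$.

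The only delicate point I expect is the passage from the identity in $M\otimes_R\mathrm{Frac}(R)$ to an identity in $M$. This is exactly where torsion-freeness is needed, and it is why the hypothesis appears in the statement. Everything after that is a direct computation, and it relies only on the Frobenius-type fact $(a-b)^2 = a^2 - b^2$ in characteristic $2$.
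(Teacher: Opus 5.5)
Your proof is correct and follows essentially the same route as the paper: you invoke Lemma~\ref{lem: good inv is expansion char2} to get $\rho(y)=\lambda y$, substitute into $\langle x,\rho(y)\rangle\rho(y)=\langle x,y\rangle y$, and use characteristic $2$ to force $\lambda=1$ (your $a^2=b^2\Rightarrow a=b$ is the paper's $\lambda^2+1=0\Rightarrow\lambda=1$). Your version is more careful in two places: you justify applying the lemma via $\rho|_{M\setminus\rad(M)}\in\good(M\setminus\rad(M),\ast)$, which the paper skips, and you clear denominators ($\lambda=b/a$) so that the use of torsion-freeness, namely injectivity of $M\to M\otimes_R\mathrm{Frac}(R)$, is explicit.
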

\begin{proof}
By Lemma~\ref{lem: good inv is expansion char2}, for any $y \in M \setminus \rad(M)$, there exists $\lambda \in \mathrm{Frac}(R)$ such that $\rho(y) = \lambda y$ in $M \otimes_{R}\mathrm{Frac}(R)$.

Since $\rho \in \good(M)$, for any $x \in M$, we have $x \ast \rho(y) = x \ast^{-1} y$, i.e.
\begin{equation*}
    \langle x, \rho(y) \rangle \rho(y) = \langle x,y \rangle y.
\end{equation*}
Since $\rho(y) = \lambda y$, we obtain $(\lambda^2 +1)\langle x,y \rangle y =0$.

$y \notin \rad(M)$ implies that there exists $x \in M \setminus \rad(M)$ such that $\langle x,y \rangle \neq 0$.
Moreover, $y \notin T(M)$ implies $\lambda^2 + 1 = 0$, i.e. $\lambda = 1$.
Therefore, for any $y \in M\setminus \rad(M)$,
we have $\rho(y) = y$.
\end{proof}

\begin{remark}
Since we do not suppose that the bilinear form $\langle, \rangle$ is nondegenerate, we can only say that $\rho(0) \in T(M)\subseteq \rad(M)$ for $\rho \in \good(M)$, i.e. $\rho(0) = 0$ does not hold in general (cf. \cite[Lemma 5.1]{NY2604}).
\end{remark}

\begin{remark}
It is essential in Proposition~\ref{prop: good inv is identity cha2} to assume that the underlying module $M$ is torsion-free. 
Without this assumption, one can only conclude that $\rho(y)-y \in T(M)$ for every $y \in M \setminus \rad(M)$ (see Example~\ref{eg: with torsion-free}).
\end{remark}

From the discussions above, we have a generalization of \cite[Theorem 5.3]{NY2604} as follows:

\begin{cor}\label{cor: char2 implies good is inv of rad}
Let $R$ be an integral domain of characteristic 2 and $(M,\ast)$ a generalized symplectic quandle over $R$.
If the underlying module $M$ is torsion-free, then there exists a bijection
\begin{equation*}
    \good(M,\ast) \cong \operatorname{Inv}(\rad(M)).
\end{equation*}
In particular, if the bilinear form $\langle, \rangle$ is nondegenerate, then we have $\good(M,\ast) = \{\id_{M}\}$.
\end{cor}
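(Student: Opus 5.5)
Both assertions follow by combining the decomposition Theorem~\ref{main thm: symplectic qdle} with Proposition~\ref{prop: good inv is identity cha2}. The degenerate case $M = \rad(M)$ has to be handled separately.

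\emph{Case $M = \rad(M)$.} Here $\langle,\rangle$ vanishes identically, so $(M,\ast)$ is a trivial quandle. Hence $\good(M,\ast) = \operatorname{Inv}(M) = \operatorname{Inv}(\rad(M))$, and the bijection is the identity.

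\emph{Case $M \neq \rad(M)$.} Theorem~\ref{main thm: symplectic qdle} gives a canonical bijection
\[
\good(M,\ast) \cong \good(M\setminus \rad(M),\ast) \times \operatorname{Inv}(\rad(M)), \qquad \rho \mapsto (\rho|_{M\setminus\rad(M)}, \rho|_{\rad(M)}).
\]
It then suffices to show that $\good(M\setminus\rad(M),\ast) = \{\id_{M\setminus\rad(M)}\}$.

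\emph{Nonemptiness.} In characteristic $2$, $(M,\ast)$ is a kei. Hence the subquandle $M\setminus\rad(M)$ is also involutory, and the identity is a good involution on it.

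\emph{Uniqueness.} Let $\alpha \in \good(M\setminus \rad(M),\ast)$. Extend it by $\id_{\rad(M)}$ to obtain $\rho \in \good(M,\ast)$ via surjectivity of the bijection above. Proposition~\ref{prop: good inv is identity cha2} applies, since $M$ is torsion-free and $M\neq\rad(M)$, and gives $\alpha = \rho|_{M\setminus\rad(M)} = \id$. (One could instead apply Lemma~\ref{lem: good inv is expansion char2} directly to $\alpha$; the extension trick avoids rechecking that argument.) Therefore $\good(M,\ast)\cong \{\id\}\times\operatorname{Inv}(\rad(M)) \cong \operatorname{Inv}(\rad(M))$.

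\emph{Nondegenerate case.} If $\langle,\rangle$ is nondegenerate, then $\rad(M)=\{0\}$ and $\operatorname{Inv}(\{0\})$ is a single point. So $\good(M,\ast)$ has exactly one element, and $\id_M$ lies in it because $(M,\ast)$ is a kei. This also follows from Proposition~\ref{prop: faithfulness} together with Ta's bound $\lvert\good\rvert\le 1$ for faithful quandles.

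The only delicate point is the hypothesis bookkeeping: Proposition~\ref{prop: good inv is identity cha2} must be applied to a good involution on the whole of $M$, which is why the extension step is included. The rest is assembly of earlier results.
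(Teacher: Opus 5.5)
Your proof is correct and follows the paper's argument: you split into the cases $M=\rad(M)$ and $M\neq\rad(M)$, and in the latter you combine Theorem~\ref{main thm: symplectic qdle} with Proposition~\ref{prop: good inv is identity cha2} to get $\good(M,\ast)\cong\{\id\}\times\operatorname{Inv}(\rad(M))$. Your extension-by-$\id_{\rad(M)}$ step and the nonemptiness check via the kei property spell out what the paper's one-line equality $\good(M\setminus\rad(M),\ast)=\{\id_{M\setminus\rad(M)}\}$ leaves implicit.
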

\begin{proof}
If $M = \rad(M)$, then we have 
\begin{equation*}
    \good(M,\ast) = \good(\rad(M),\ast) = \operatorname{Inv}(\rad(M)).
\end{equation*}
If $M \neq \rad(M)$, then by Theorem \ref{main thm: symplectic qdle} and Proposition \ref{prop: good inv is identity cha2} we have 
\begin{align*}
    \good(M,\ast) 
    &\cong \good(M\setminus \rad(M),\ast) \times \operatorname{Inv}(\rad(M)) \\
    &= \{\id_{M\setminus \rad(M)}\} \times \operatorname{Inv}(\rad(M)) \\
    &\cong \operatorname{Inv}(\rad(M)).
\end{align*}
This completes the proof.
\end{proof}

This bijection implies that the set $\good(M,\ast)$ of good involutions on a generalized symplectic quandle $(M,\ast)$ reflects the degeneracy of the underlying torsion-free module $M$.

Since the number of involutions on a finite set is given by the \emph{telephone numbers}, we obtain the following corollary:

\begin{cor}
Let $\mathbb{F}_{2}$ denote the finite field of order two. 
Let $(M,\ast)$ be a generalized symplectic quandle over $\mathbb{F}_{2}$.
If $n \coloneqq \lvert \rad(M) \rvert$, then 
\begin{equation*}
    \lvert \good(M,\ast) \rvert
    =
    \sum_{k=0}^{\lfloor n/2 \rfloor} \frac{n !}{2^{k} k! (n-2k)!}.
\end{equation*}
\end{cor}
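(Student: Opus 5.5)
The plan is to reduce the count to the preceding corollary and then count involutions on a finite set of size $n$. Over $\mathbb{F}_{2}$, an integral domain of characteristic $2$, every $\mathbb{F}_{2}$-module is a vector space, so $M$ is torsion-free: $rm=0$ with $r\neq 0$ forces $r=1$ and hence $m=0$. Corollary~\ref{cor: char2 implies good is inv of rad} therefore applies and gives a bijection $\good(M,\ast)\cong\operatorname{Inv}(\rad(M))$. This covers both the case $M=\rad(M)$ and the case $M\neq\rad(M)$. It follows that $\lvert\good(M,\ast)\rvert=\lvert\operatorname{Inv}(\rad(M))\rvert$. The statement implicitly assumes $n=\lvert\rad(M)\rvert$ is finite, which holds for instance when $M$ is finite-dimensional; otherwise both sides are read as infinite or the formula is simply not meant to apply.

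It remains to count the involutions on an $n$-element set $X=\rad(M)$. Here $\operatorname{Inv}(X)$ means the maps $\rho$ with $\rho^{2}=\id$, and the identity is included; this is consistent with the earlier use of $\operatorname{Inv}$, since $\id$ counts as a good involution on involutory quandles. Such a $\rho$ is determined by its decomposition of $X$ into fixed points and $2$-cycles. Sort the involutions by the number $k$ of $2$-cycles, where $0\le k\le\lfloor n/2\rfloor$. For fixed $k$, first choose the $2k$ moved points, which can be done in $\binom{n}{2k}$ ways. Then pair them up, which can be done in $(2k)!/(2^{k}k!)$ ways: order the $2k$ points, group them into consecutive pairs, and divide by the $2^{k}$ orderings within pairs and the $k!$ orderings of the pairs. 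The product is $n!/(2^{k}k!(n-2k)!)$, and summing over $k$ gives the stated formula.

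No step is a genuine obstacle. The only point needing care is checking that the hypotheses of Corollary~\ref{cor: char2 implies good is inv of rad} hold, namely that $M$ is torsion-free, and noting the convention that $\operatorname{Inv}$ includes the identity.
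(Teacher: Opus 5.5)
Your proposal is correct and follows the paper's route. The paper also obtains this count from Corollary~\ref{cor: char2 implies good is inv of rad}, whose torsion-freeness hypothesis is automatic over $\mathbb{F}_{2}$, combined with the fact that involutions on an $n$-element set are counted by the telephone numbers; you just supply the details the paper leaves implicit (the torsion-freeness check and the standard count of involutions by their number of $2$-cycles).
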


We end this section by providing an example of a good involution on generalized symplectic quandles when the underlying module has torsion.

\begin{example}\label{eg: with torsion-free}
Let $R = \mathbb{F}_{2}[t]$ be the polynomial ring over $\mathbb{F}_{2}$.
We define an $R$-module $M$ by 
\begin{equation*}
    M \coloneqq Re_{1}\oplus Re_{2} \oplus (R/(t))u,
\end{equation*}
where $e_{1},e_{2}$ are free generators and $u$ denotes a generator of the quotient module $R/(t)$ (i.e. $u = 1 + (t)$).
Note that $tu = 0$ in $M$ since 
\begin{equation*}
    tu = t(1+(t)) = t + (t) = 0.
\end{equation*}

We define an antisymmetric bilinear form $\langle,\rangle \colon M \times M \to R$ by 
\begin{equation*}
    \langle ae_{1} + be_{2} + cu, a^{\prime}e_{1} + b^{\prime}e_{2} + c^{\prime}u \rangle 
    \coloneqq
    t(ab^{\prime} + a^{\prime}b).
\end{equation*}
Then we have $\rad(M) = (R/(t))u$.

Consider the map $\rho \colon M \to M$ defined by $\rho(x) \coloneqq x + u$.
We claim that $\rho$ is a good involution on the generalized symplectic quandle $(M,\ast)$ satisfying $\rho|_{M \setminus \rad(M)} \neq \id$.

Since $\rho^2(x) = \rho(x+u) = (x+u)+u = x$ for all $x \in M$, the map $\rho$ is an involution.

First, we show that $\rho(x \ast y) = \rho(x) \ast y$ for all $x,y \in M$.
Since $u \in \rad(M)$, we have 
\begin{align*}
    \rho(x\ast y) 
    &= \rho(x + \langle x,y \rangle y)\\
    &= x + \langle x,y \rangle y + u \\
    &= x + u + \langle x+u,y \rangle y\\
    &= \rho(x) + \langle \rho(x),y \rangle y \\
    &= \rho(x) \ast y.
\end{align*}
Thus, we obtain $\rho(x \ast y) = \rho(x) \ast y$.

Next, we show that $x \ast \rho(y) = x \ast^{-1} y$ for all $x,y \in M$.
Since $tu = 0$ and $\langle x,y \rangle \in tR$ for all $x,y \in M$, we have 
\begin{align*}
    x \ast \rho(y)
    &= x + \langle x,\rho(y) \rangle \rho(y) \\
    &= x + \langle x, y+u \rangle (y+u) \\
    &= x + \langle x,y \rangle y + \langle x,y \rangle u \\
    &= x + \langle x,y \rangle y \\
    &= x \ast y.
\end{align*}
Thus, we obtain $x \ast \rho(y) = x \ast y = x \ast^{-1} y$.

Therefore, $\rho$ is a good involution on the generalized symplectic quandle $(M,\ast)$.
\end{example}

%%%%%%%%%%%%%%%%%%%%%%%%%%%%%%%%%%%%%%%
\section{Good involutions on a direct product of quandles} \label{sec: good involutions on product}

Given a family of quandles, one can construct a quandle structure on their Cartesian product; this construction is called the \emph{direct product of quandles}.

In this section, we investigate the relationship between good involutions on a direct product of quandles and those on each component.
As a consequence, we provide a counterexample to a conjecture proposed by Ta \cite[Conjecture 11.8]{ta2025goodinvolutionsconjugationsubquandles}.

%%%%%%%%%%%%%%%%%%%%%%%%%%%%%%%%%%%%%%%
\subsection{Decomposition of good involutions on a direct product of quandles}
%%%
\begin{defin}\label{def: direct product}
Let $(Q_{1},\ast_{1}),(Q_{2},\ast_{2})$ be quandles. 
We define a binary operation $\ast$ on the direct product $Q_{1} \times Q_{2}$ by 
\begin{equation*}
    (x_{1},x_{2}) \ast (y_{1},y_{2}) 
    \coloneqq 
    (x_{1}\ast_{1}y_{1}, x_{2}\ast_{2}y_{2}).
\end{equation*}
The pair $(Q_{1}\times Q_{2}, \ast)$ is a quandle, which is called a \emph{direct product of quandles}.
\end{defin}
%%%

In what follows, we compare the set of good involutions on a direct product of quandles with the product of those on its components.
%%%
\begin{prop}\label{prop: injective good involutions on direct products}
Let $\{(Q_{\lambda},\ast_{\lambda})\}_{\lambda \in \Lambda}$ be a given family of quandles
and $(\prod_{\lambda \in \Lambda}Q_{\lambda},\ast)$ denote their direct product.
Then there exists a canonical inclusion
\begin{equation*}
    \prod_{\lambda \in \Lambda} \good(Q_{\lambda},\ast_{\lambda})
    \hookrightarrow
    \good(\prod_{\lambda \in \Lambda}Q_{\lambda}).
\end{equation*}
\end{prop}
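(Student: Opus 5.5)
Given $(\rho_{\lambda})_{\lambda} \in \prod_{\lambda \in \Lambda}\good(Q_{\lambda},\ast_{\lambda})$, I will define the componentwise map $\rho \colon \prod_{\lambda}Q_{\lambda} \to \prod_{\lambda}Q_{\lambda}$ by $\rho((x_{\lambda})_{\lambda}) \coloneqq (\rho_{\lambda}(x_{\lambda}))_{\lambda}$. The direct product of an arbitrary family is understood with the componentwise operation $(x_{\lambda})_{\lambda} \ast (y_{\lambda})_{\lambda} \coloneqq (x_{\lambda}\ast_{\lambda}y_{\lambda})_{\lambda}$, extending Definition~\ref{def: direct product}. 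Its dual operation is also componentwise: $(x_{\lambda})_{\lambda} \ast^{-1} (y_{\lambda})_{\lambda} = (x_{\lambda}\ast_{\lambda}^{-1}y_{\lambda})_{\lambda}$. This holds because $s_{(y_{\lambda})}=\prod_{\lambda} s_{y_{\lambda}}$, whose inverse is $\prod_{\lambda} s_{y_{\lambda}}^{-1}$.

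With this observation, every required identity reduces to the corresponding identity in each coordinate.
\begin{itemize}
\item $\rho$ is an involution, since $\rho^{2} = (\rho_{\lambda}^{2})_{\lambda} = \id$.
\item The first axiom of Definition~\ref{def: good involutions} holds, since
\begin{equation*}
\rho(x \ast y) = (\rho_{\lambda}(x_{\lambda}\ast_{\lambda}y_{\lambda}))_{\lambda} = (\rho_{\lambda}(x_{\lambda})\ast_{\lambda}y_{\lambda})_{\lambda} = \rho(x)\ast y .
\end{equation*}
\item The second axiom holds, since
\begin{equation*}
x \ast \rho(y) = (x_{\lambda}\ast_{\lambda}\rho_{\lambda}(y_{\lambda}))_{\lambda} = (x_{\lambda}\ast_{\lambda}^{-1}y_{\lambda})_{\lambda} = x\ast^{-1}y .
\end{equation*}
\end{itemize}
Hence $\rho \in \good(\prod_{\lambda}Q_{\lambda})$.

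Finally, the assignment $(\rho_{\lambda})_{\lambda} \mapsto \rho$ is injective. Suppose two families $(\rho_{\lambda})$ and $(\rho'_{\lambda})$ give the same $\rho$. Fix $\mu$ and $x_{\mu} \in Q_{\mu}$, and extend $x_{\mu}$ to an element of the product by choosing arbitrary coordinates elsewhere. Comparing the $\mu$-th coordinates of the two images gives $\rho_{\mu}(x_{\mu}) = \rho'_{\mu}(x_{\mu})$. This extension step needs the other factors to be nonempty, which holds because every quandle here is nonempty (cf. Example~\ref{eg: trivial quandle}); if one allowed empty factors, the product would be empty and the statement degenerate. I expect no real obstacle: the only points needing care are the componentwise description of $\ast^{-1}$ and this nonemptiness remark for injectivity.
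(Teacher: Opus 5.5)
Your proposal is correct and follows the same route as the paper, which defines exactly this componentwise map $G((\rho_{\lambda})_{\lambda})((x_{\lambda})_{\lambda}) = (\rho_{\lambda}(x_{\lambda}))_{\lambda}$ and simply says that checking it is a good involution, and that $G$ is injective, is ``straightforward.'' Your write-up supplies the details the paper omits: the componentwise description of $\ast^{-1}$, and the observation that injectivity needs every factor to be nonempty. That hypothesis is genuinely necessary, since injectivity fails if some factor is empty, and for infinite $\Lambda$ extending $x_{\mu}$ to a point of the product uses the axiom of choice.
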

\begin{proof}
We define a map $G\colon \prod_{\lambda \in \Lambda} \good(Q_{\lambda},\ast_{\lambda}) \to \good(\prod_{\lambda \in \Lambda}Q_{\lambda})$ by 
\begin{equation*}
    G((\rho_{\lambda})_{\lambda \in \Lambda})((x_{\lambda})_{\lambda \in \Lambda}) \coloneqq (\rho_{\lambda}(x_{\lambda}))_{\lambda \in \Lambda}.
\end{equation*}
It is straightforward to verify that $G((\rho_{\lambda})_{\lambda \in \Lambda})$ is indeed a good involution on the direct product $\prod_{\lambda \in \Lambda} Q_{\lambda}$. Thus, $G$ is well-defined and injective.
\end{proof}

The following example shows that the complete decomposition does not hold in general.

%%%
\begin{example}\label{eg: family of trivial qdle, product}
Let $A = \{a_{1},a_{2}\},B = \{b_{1},b_{2}\}$ be sets of two elements and $A_{\mathrm{tri}},B_{\mathrm{tri}}$ denote the trivial quandles on $A,B$, respectively.

Since the direct product of these quandles satisfies
\begin{equation*}
    A_{\mathrm{tri}}\times B_{\mathrm{tri}}
    \cong 
    (A\times B)_{\mathrm{tri}},
\end{equation*}
we have 
\begin{equation*}
    \good(A_{\mathrm{tri}}\times B_{\mathrm{tri}})
    =
    \good((A\times B)_{\mathrm{tri}})
    =
    \operatorname{Inv}(A \times B).
\end{equation*}
On the other hand, it is well-known that $\operatorname{Inv}(A) \times \operatorname{Inv}(B) \subsetneq \operatorname{Inv}(A \times B)$ for sets of more than one element.
Since $\good(A_{\mathrm{tri}}) = \operatorname{Inv}(A)$ and $\good(B_{\mathrm{tri}}) = \operatorname{Inv}(B)$, we obtain 
\begin{equation*}
    \good(A_{\mathrm{tri}}) \times \good(B_{\mathrm{tri}})
    \subsetneq 
    \good(A_{\mathrm{tri}}\times B_{\mathrm{tri}}).
\end{equation*}
\end{example}

Next, we provide a sufficient condition for the equality to hold.
%%%
\begin{thm}\label{main thm: good involutions on direct products}
Let $\{(Q_{\lambda},\ast_{\lambda})\}_{\lambda = 1,\ldots,n}$ be a given finite family of nontrivial connected quandles
and $(\prod_{\lambda = 1}^{n}Q_{\lambda},\ast)$ denote their direct product.
Then there is a canonical bijection
\begin{equation*}
    \good(\prod_{\lambda = 1}^{n}Q_{\lambda})
    \cong 
    \prod_{\lambda = 1}^{n} \good(Q_{\lambda},\ast_{\lambda}).
\end{equation*}
\end{thm}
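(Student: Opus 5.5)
The plan is to show that every good involution $\rho$ on $P \coloneqq \prod_{\lambda=1}^{n}Q_{\lambda}$ is \emph{coordinatewise}: each coordinate $\rho_{\mu}(x)$ of $\rho(x)=(\rho_{1}(x),\ldots,\rho_{n}(x))$ depends only on $x_{\mu}$. Granting this, we define $\rho_{\mu}\colon Q_{\mu}\to Q_{\mu}$ by the induced map, which is well defined because every $Q_{\nu}$ is nonempty. Projecting the identities $\rho^{2}=\id_{P}$, $\rho(x\ast y)=\rho(x)\ast y$ and $x\ast\rho(y)=x\ast^{-1}y$ onto the $\mu$-th coordinate then shows $\rho_{\mu}\in\good(Q_{\mu},\ast_{\mu})$. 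The assignment $\rho\mapsto(\rho_{\mu})_{\mu}$ is then inverse to the inclusion $G$ of Proposition~\ref{prop: injective good involutions on direct products}, which gives the bijection.

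To prove coordinatewise dependence, I would first record two consequences of the axioms. By condition (1), $\rho\circ s_{z}=s_{z}\circ\rho$ for all $z\in P$, and hence also $\rho\circ s_{z}^{-1}=s_{z}^{-1}\circ\rho$. Moreover, $\rho(x)\ast x=\rho(x\ast x)=\rho(x)$, so $s_{x}(\rho(x))=\rho(x)$ and therefore also $s_{x}^{-1}(\rho(x))=\rho(x)$. Read componentwise, this says $\rho_{\mu}(x)\ast_{\mu}^{\pm1}x_{\mu}=\rho_{\mu}(x)$ for every $\mu$.

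The key step is to fix $\lambda$, a point $x\in P$ and an element $w\in Q_{\lambda}$, and to put $z\coloneqq(x_{1},\ldots,x_{\lambda-1},w,x_{\lambda+1},\ldots,x_{n})$. By idempotency, $x'\coloneqq s_{z}^{\pm1}(x)$ agrees with $x$ in every coordinate except the $\lambda$-th, which becomes $x_{\lambda}\ast_{\lambda}^{\pm1}w$. For $\mu\neq\lambda$ we then compute
\begin{equation*}
    \rho_{\mu}(x')=\bigl(s_{z}^{\pm1}(\rho(x))\bigr)_{\mu}=\rho_{\mu}(x)\ast_{\mu}^{\pm1}x_{\mu}=\rho_{\mu}(x).
\end{equation*}
Since $Q_{\lambda}$ is connected, any $x_{\lambda}'\in Q_{\lambda}$ is reached from $x_{\lambda}$ by a finite word in such moves $s_{w}^{\pm1}$. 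Hence $\rho_{\mu}$ is unchanged when only the $\lambda$-th coordinate varies. Because the family is finite, any two points of $P$ are joined by changing one coordinate at a time, finitely many times, so $\rho_{\mu}(x)$ depends only on $x_{\mu}$.

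I expect the main obstacle to be exactly this independence step. The inner automorphism group of $P$ is generated by the diagonal maps $s_{z}$, not by the full product $\prod_{\lambda}\operatorname{Inn}(Q_{\lambda})$, so one cannot move a single coordinate for free. The trick above, of choosing $z$ to agree with $x$ off the $\lambda$-th coordinate and combining it with $s_{x}(\rho(x))=\rho(x)$, is what is needed to get around this. Finiteness is used only to connect arbitrary points by finitely many single-coordinate moves. Nontriviality is not essential in this argument, since a trivial connected quandle is a single point, for which the statement holds trivially.
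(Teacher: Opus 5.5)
Your proof is correct and follows essentially the paper's argument. You commute $\rho$ with point symmetries $s_{y}^{\pm 1}$ whose $\mu$-th coordinate equals $x_{\mu}$, use $s_{x}(\rho(x))=\rho(x)$ to see that the $\mu$-th coordinate of $\rho$ does not move, then read off $\rho_{\mu}\in\good(Q_{\mu},\ast_{\mu})$ coordinatewise and invert $G$. Your single-coordinate moves (taking $z$ equal to the current point off the $\lambda$-th slot) make explicit the path in the product that the paper only asserts when it says one may choose $(y_{k})_{i}=x_{i}$, and your observation that nontriviality is never used is also correct.
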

\begin{proof}
By Proposition~\ref{prop: injective good involutions on direct products}, we have a well-defined injection 
\begin{equation*}
    G\colon \prod_{\lambda=1}^{n} \good(Q_{\lambda},\ast_{\lambda}) \to \good(\prod_{\lambda=1}^{n}Q_{\lambda})
\end{equation*} by 
\begin{equation*}
    G((\rho_{\lambda})_{\lambda \in \Lambda})((x_{\lambda})_{\lambda \in \Lambda}) \coloneqq (\rho_{\lambda}(x_{\lambda}))_{\lambda \in \Lambda}.
\end{equation*}

To establish the surjectivity of $G$, it suffices to show that for any $\rho \in \operatorname{Good}(\prod_{\lambda=1}^{n}Q_{\lambda})$, the $i$-th component $\rho(x)_i$ depends only on $x_i$.
Precisely, we claim that if $x = (x_j)_j, x' = (x'_j)_j \in \prod_{\lambda=1}^{n}Q_{\lambda}$ satisfy $x_i = x'_i$, then $\rho(x)_i = \rho(x')_i$ for all $1 \leq i \leq n$.

Fix $i \in \{1, \ldots, n\}$.
Since each $Q_{\lambda}$ is connected, there exist elements $y_1, \ldots, y_m \in \prod_{\lambda=1}^{n}Q_{\lambda}$ and signs $\varepsilon_1, \ldots, \varepsilon_m \in \{\pm 1\}$ such that
\begin{equation}\label{eq: connected finite family}
    x' = s_{y_{m}}^{\varepsilon_{m}}\circ \cdots \circ s_{y_{1}}^{\varepsilon_{1}} (x).
\end{equation}
Since $x_i = x'_i$, we may choose the $i$-th component of each $y_k$ to be $(y_k)_i = x_i$ without affecting the connectivity in the other components.

By the definition of good involutions, we have $\rho \circ s_{y} = s_{y} \circ \rho$ for any good involution $\rho$ and every $y$, 
i.e. $\rho$ commutes with the point symmetry $s_{y}$ and its inverse $s_{y}^{-1}$.
Applying $\rho$ to Equation~\eqref{eq: connected finite family} yields 
\begin{equation}\label{eq: connected finite family and good involution}
\rho(x^{\prime}) = s_{y_{m}}^{\varepsilon_{m}}\circ \cdots \circ s_{y_{1}}^{\varepsilon_{1}} (\rho(x)).
\end{equation}

Moreover, by the definition of good involutions, we have $\rho(x) = \rho(x)\ast x$. Taking the $i$-th component, we have $\rho(x)_{i} = \rho(x)_{i} \ast_{i} x_{i}$, i.e.
\begin{equation*}
    s_{x_{i}}(\rho(x)_{i}) = \rho(x)_{i} \text{ and } s_{x_{i}}^{-1}(\rho(x)_{i}) = \rho(x)_{i}.
\end{equation*}

Since $x_{i} = x^{\prime}_{i}$, we may assume that $(y_{1})_{i} = \cdots = (y_{m})_{i} = x_{i}$.
Thus, Equation~\eqref{eq: connected finite family and good involution} implies that $\rho(x^{\prime})_{i} = \rho(x)_{i}$.
Therefore, the map $\rho_{i}\colon Q_{i} \to Q_{i}$ defined by $a \mapsto \rho(x)_{i}$, where $x \in \prod_{\lambda=1}^{n}Q_{\lambda}$ with $x_{i} = a$ is well-defined.

Componentwise evaluation shows that each $\rho_i$ is indeed a good involution on $Q_i$, and thus $G((\rho_1, \ldots, \rho_n)) = \rho$.
Therefore, $G$ is surjective, which completes the proof.
\end{proof}
%%%

\subsection{Connected noninvolutory symmetric quandles}
\label{subsec: cns quandles}

We apply the fact in the previous section to provide a counterexample to the following conjecture:
%%%
\begin{conj}[{\cite[Conjecture 11.8]{ta2025goodinvolutionsconjugationsubquandles}}] \label{conj: ta}
Let $(Q,\ast)$ be a connected noninvolutory quandle. 
Then we have $\lvert \good(Q,\ast) \rvert \leq 1$.
\end{conj}
%%%

Since the binary operation on the direct product of quandles is component-wise, we obtain the following facts:

%%%
\begin{lemma}\label{lem: product does not preserve involutory}
Let $(Q_{1},\ast_{1}), (Q_{2},\ast_{2})$ be quandles.
If $(Q_{1},\ast_{1})$ is an involutory quandle and $(Q_{2},\ast_{2})$ is a noninvolutory quandle, then the direct product $(Q_{1} \times Q_{2},\ast)$ is a noninvolutory quandle.
\end{lemma}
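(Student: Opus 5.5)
The plan is to argue by contradiction, working directly from the definition of an involutory quandle, $\ast = \ast^{-1}$, which amounts to $s_{y}^{2} = \id$ for every $y$. First I record that the dual operation on $Q_{1}\times Q_{2}$ is also computed componentwise: $(x_{1},x_{2}) \ast^{-1} (y_{1},y_{2}) = (x_{1}\ast_{1}^{-1}y_{1}, x_{2}\ast_{2}^{-1}y_{2})$. To see this, note that the map $s_{(y_{1},y_{2})}$ equals the product map $s_{y_{1}}\times s_{y_{2}}$, whose inverse is $s_{y_{1}}^{-1}\times s_{y_{2}}^{-1}$.

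Since $(Q_{2},\ast_{2})$ is noninvolutory, I pick $x_{2},y_{2}\in Q_{2}$ with $x_{2}\ast_{2} y_{2} \neq x_{2}\ast_{2}^{-1} y_{2}$. Since quandles are nonempty, I also choose an arbitrary $a\in Q_{1}$. I then compare the two products
\begin{equation*}
(a,x_{2})\ast(a,y_{2}) = (a, x_{2}\ast_{2}y_{2})
\quad\text{and}\quad
(a,x_{2})\ast^{-1}(a,y_{2}) = (a, x_{2}\ast_{2}^{-1}y_{2}).
\end{equation*}
Here I use idempotency $a\ast_{1}a = a = a\ast_{1}^{-1}a$. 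This means the involutory hypothesis on $Q_{1}$ is not even needed, although it is consistent with the statement.

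The second components of these two products differ, so $\ast \neq \ast^{-1}$ on $Q_{1}\times Q_{2}$, and the product is noninvolutory. There is no real obstacle here. The only point that needs care is justifying that the dual operation on the product is componentwise, and the inverse of a product map handles that.
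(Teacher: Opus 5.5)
Your argument is correct and is exactly the componentwise reasoning the paper relies on; the paper gives no written proof of this lemma and only notes that the product operation is componentwise. Your side remark is also right: the only property of $Q_{1}$ you use is $Q_{1}\neq\emptyset$, not that it is involutory. That nonemptiness assumption is genuinely needed, since an empty $Q_{1}$ would make the product vacuously involutory.
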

%%%
\begin{lemma}\label{lem: product preserve connectedness}
Let $(Q_{1},\ast_{1}), (Q_{2},\ast_{2})$ be quandles.
If $(Q_{1},\ast_{1})$ and $(Q_{2},\ast_{2})$ are connected, then the direct product $(Q_{1} \times Q_{2},\ast)$ is connected.
\end{lemma}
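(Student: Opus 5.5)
The plan is to show directly that $\operatorname{Inn}(Q_{1}\times Q_{2},\ast)$ acts transitively on $Q_{1}\times Q_{2}$. The idea is to move the two coordinates one at a time, using the idempotency axiom $x\ast x=x$ to freeze the coordinate we are not moving. This is the same device that appears in the proof of Theorem~\ref{main thm: good involutions on direct products}.

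First, since the operation on $Q_{1}\times Q_{2}$ is componentwise, for any $(y_{1},y_{2})$ we have $s_{(y_{1},y_{2})}=s_{y_{1}}\times s_{y_{2}}$. Consequently $s_{(y_{1},y_{2})}^{-1}=s_{y_{1}}^{-1}\times s_{y_{2}}^{-1}$.

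Next, fix $(x_{1},x_{2})$ and $(x_{1}',x_{2}')$ in $Q_{1}\times Q_{2}$. Because $(Q_{1},\ast_{1})$ is connected, there are $b_{1},\ldots,b_{m}\in Q_{1}$ and $\varepsilon_{k}\in\{\pm1\}$ with
\begin{equation*}
x_{1}'=s_{b_{m}}^{\varepsilon_{m}}\circ\cdots\circ s_{b_{1}}^{\varepsilon_{1}}(x_{1}).
\end{equation*}
Consider the inner automorphism
\begin{equation*}
\phi=s_{(b_{m},x_{2})}^{\varepsilon_{m}}\circ\cdots\circ s_{(b_{1},x_{2})}^{\varepsilon_{1}}.
\end{equation*}
By the quandle axiom $x_{2}\ast_{2}x_{2}=x_{2}$, both $s_{x_{2}}$ and $s_{x_{2}}^{-1}$ fix $x_{2}$. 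Hence $\phi(x_{1},x_{2})=(x_{1}',x_{2})$.

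Symmetrically, connectedness of $(Q_{2},\ast_{2})$ gives $c_{1},\ldots,c_{l}\in Q_{2}$ and signs $\delta_{k}\in\{\pm1\}$ moving $x_{2}$ to $x_{2}'$. The composite
\begin{equation*}
\psi=s_{(x_{1}',c_{l})}^{\delta_{l}}\circ\cdots\circ s_{(x_{1}',c_{1})}^{\delta_{1}}
\end{equation*}
then sends $(x_{1}',x_{2})$ to $(x_{1}',x_{2}')$ while fixing the first coordinate $x_{1}'$. Therefore $\psi\circ\phi\in\operatorname{Inn}(Q_{1}\times Q_{2},\ast)$ maps $(x_{1},x_{2})$ to $(x_{1}',x_{2}')$, which proves transitivity.

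There is no real obstacle here. The only point needing care is that the naive approach, applying a word in $\operatorname{Inn}(Q_{1})$ and a word in $\operatorname{Inn}(Q_{2})$ simultaneously, fails because the two words may have different lengths and the generators come paired. Padding one coordinate with the fixed point $x_{i}$ resolves this.
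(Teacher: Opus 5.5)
Your proof is correct. The paper states this lemma without proof, as an immediate consequence of the componentwise operation, and your argument fills in the details. The identity $s_{(y_1,y_2)}=s_{y_1}\times s_{y_2}$ combined with idempotency freezes one coordinate while you move the other, and this is the same padding device the paper uses in its proof of the direct-product decomposition theorem, so your approach is essentially the intended one.
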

%%%

From these facts, we say that if $(Q_{1},\ast_{1})$ is a connected involutory quandle and $(Q_{2},\ast_{2})$ is a connected noninvolutory quandle such that $\lvert \good(Q_{1},\ast_{1}) \rvert > 1$ and $\good(Q_{2},\ast_{2}) \neq \emptyset$, then the direct product $(Q_{1} \times Q_{2},\ast)$ is a connected noninvolutory quandle such that $\lvert \good(Q_{1}\times Q_{2},\ast) \rvert > 1$, which will be a counterexample to Conjecture~\ref{conj: ta}. Indeed, we have an example of such quandles.

\begin{example}\label{eg: counterexample to conjecture by Ta}
Let $GL(2,\mathbb{F}_{3})$ be the general linear group over the finite field $\mathbb{F}_{3}$ of order three,
and let $A_{5}$ be the alternating group of degree $5$.
We define a conjugacy class $X_{1}$ in $GL(2,\mathbb{F}_{3})$ by 
\begin{equation*}
    X_{1} \coloneqq \{
        g^{-1}J g \mid g \in GL(2,\mathbb{F}_{3})
    \},
    \quad
    \text{where }
    J = \begin{bmatrix}
        0 & 1 \\
        1 & 0
    \end{bmatrix},
\end{equation*}
and let $X_{2}$ denote the conjugacy class of $3$-cycles in $A_{5}$.

Then we claim that the direct product $\operatorname{Conj}(X_{1}) \times \operatorname{Conj}(X_{2})$ of their conjugation quandles is a connected noninvolutory quandle equipped with multiple good involutions.

First, we show that $\operatorname{Conj}(X_{1})$ is a connected involutory quandle equipped with multiple good involutions.

Since $\det J = -1$, it follows that the subgroup $\langle X_{1} \rangle$ generated by $X_{1}$ is a normal subgroup of $GL(2,\mathbb{F}_{3})$ containing an element whose determinant is $-1$.
This implies that $\langle X_{1} \rangle =GL(2,\mathbb{F}_{3})$, i.e. $\operatorname{Conj}(X_{1})$ is connected.

Let $I$ denote the identity matrix.
Since $J \in X_{1}$ satisfies $J^2 = I$, i.e. the order of $J$ is $2$, it follows that the order of every element in $X_{1}$ is $2$.
Thus $\operatorname{Conj}(X_{1})$ is involutory, which implies that the identity map $\id_{X_{1}}$ is a good involution on $\operatorname{Conj}(X_{1})$.
Moreover, since $-I$ is in the center $Z(GL(2,\mathbb{F}_{3}))$ and $(-I)^{2} = I$, we obtain $-J \in X_{1}$.
Indeed, let $D \in GL(2,\mathbb{F}_{3})$ denote 
\begin{equation*}
    D = 
    \begin{bmatrix}
        1 & 0 \\
        0 & -1
    \end{bmatrix}.
\end{equation*}
Then we have 
\begin{equation*}
    D^{-1}JD 
    =
    \begin{bmatrix}
        1 & 0 \\
        0 & -1
    \end{bmatrix} 
    \begin{bmatrix}
        0 & 1 \\
        1 & 0
    \end{bmatrix}
    \begin{bmatrix}
        1 & 0 \\
        0 & -1 
    \end{bmatrix}
    =
    \begin{bmatrix}
        0 & 1 \\
        -1 & 0
    \end{bmatrix}
    \begin{bmatrix}
        1 & 0 \\
        0 & -1
    \end{bmatrix}
    = 
    -J,
\end{equation*}
i.e. $-J \in X_{1}$.
Since we have 
\begin{equation*}
    -I (g^{-1}Jg)
    =
    g^{-1}(-J)g 
    =
    g^{-1}(D^{-1}JD)g 
    =
    (Dg)^{-1}J (Dg),
\end{equation*}
it follows that $(-I)X_{1} = X_{1}$.
Thus, we can define an involution $\tau \colon X_{1} \to X_{1}$ by $\tau(u) = -u$.

Recall that $\operatorname{Conj}(X_{1}) = (X_{1},\ast)$ is involutory, i.e. $\ast = \ast^{-1}$.
For any $x,y \in X_{1}$, we have 
\begin{equation*}
    \tau(x \ast y) = -(x \ast y) = - y^{-1}xy = y^{-1}(-x)y 
    = (-x)\ast y = \tau (x)\ast y,
\end{equation*}
and 
\begin{equation*}
    x \ast \tau (y) = x \ast (-y) = (-y)^{-1}x (-y)
    = y^{-1}xy = x \ast y = x \ast^{-1} y.
\end{equation*}
Hence, the involution $\tau$ is a good involution on $\operatorname{Conj}(X_{1})$,
i.e. we obtain 
\begin{equation*}
    \{\id_{X_{1}}, \tau \} \subseteq \good(\operatorname{Conj}(X_{1})).
\end{equation*}
Thus, the conjugation quandle $\operatorname{Conj}(X_{1})$ is a connected involutory quandle satisfying $\lvert \good(\operatorname{Conj}(X_{1})) \rvert > 1$.

Next, we show that $\operatorname{Conj}(X_{2})$ is a connected noninvolutory symmetric quandle.

Since $A_{5}$ is generated by $3$-cycles, $\langle X_{2} \rangle = A_{5}$, i.e. $\operatorname{Conj}(X_{2})$ is connected.
Since every $3$-cyle of $A_{5}$ is order $3$, it follows that the conjugation quandle $\operatorname{Conj}(X_{2})$ is noninvolutory.
By the fact on good involutions on conjugation quandles, the map $\iota\colon X_{2} \to X_{2}$ defined by $\iota(\sigma) \coloneqq \sigma^{-1}$ is a good involution on $\operatorname{Conj}(X_{2})$, i.e. we obtain 
\begin{equation*}
    \{\iota\} \subseteq \good(\operatorname{Conj}(X_{2})).
\end{equation*}
Thus, the conjugation quandle $\operatorname{Conj}(X_{2})$ is a connected noninvolutory symmetric quandle.

By Lemma~\ref{lem: product does not preserve involutory} and Lemma~\ref{lem: product preserve connectedness}, the direct product $\operatorname{Conj}(X_{1}) \times \operatorname{Conj}(X_{2})$ is a connected noninvolutory quandle.
Furthermore, by Theorem~\ref{main thm: good involutions on direct products}, it follows that 
\begin{align*}
    &\lvert \good(\operatorname{Conj}(X_{1})) \times \good(\operatorname{Conj}(X_{2})) \rvert \\
    &=
    \lvert \good(\operatorname{Conj}(X_{1})) \rvert 
    \lvert \good(\operatorname{Conj}(X_{2})) \rvert \\
    &\geq 2\cdot 1 \\ &= 2.
\end{align*}
Therefore, the direct product $\operatorname{Conj}(X_{1}) \times \operatorname{Conj}(X_{2})$ has multiple good involutions.
\end{example}

Since $X_{1} \times X_{2}$ is a conjugacy class in the direct product group $GL(2,\mathbb{F}_{3}) \times A_{5}$, the product $\operatorname{Conj}(X_{1}) \times \operatorname{Conj}(X_{2})$ forms a conjugation subquandle.
Therefore, it serves as a counterexample to \cite[Conjecture 11.9]{ta2025goodinvolutionsconjugationsubquandles}.

One might consider imposing irreducibility (or \emph{direct indecomposablity}) as an additional condition in Conjecture~\ref{conj: ta}. 
However, counterexamples still exist even in the irreducible case, and these will be presented in our forthcoming paper.

%%%%%%%%%%%%%%%%%%%%%%%%%%%%%%%%%%%%%%%
\subsection*{Acknowledgements}
The authors are grateful to Takefumi Nosaka for helpful discussions, which inspired this work.

%%%%%%%%%%%%%%%%%%%%%%%%%%%%%%%%%%%%%%%
\bibliographystyle{alpha}
\bibliography{decomp}
%%%%%%%%%%%%%%%%%%%%%%%%%%%%%%%%%%%%%%%
\end{document}